\newtheorem{theorem}{Theorem}[section]
\newtheorem{corollary}{Corollary}[section]
\newtheorem{lemma}{Lemma}[section]
\newtheorem{prop}{Proposition}[section]
\theoremstyle{definition}
\newtheorem{definition}{Definition}[section]
\newtheorem{observation}{Observation}[section]
\newtheorem{example}{Example}[section]
\newtheorem{problem}{Problem}[section]
\newtheorem{remark}{Remark}[section]
\numberwithin{equation}{section}
\author{\hspace{1cm} Eshwar Srinivasan \ and Ramesh Hariharasubramanian \\
{{\footnotesize s.eshwar@iitg.ac.in},\ {\footnotesize  ramesh\_h@iitg.ac.in}}\\{\footnotesize Department of Mathematics, Indian Institute of Technology Guwahati, Guwahati, Assam 781039, India}}
\begin{document}
\title{Minimum length word-representants of graph products}
\maketitle

\begin{abstract}
		A graph $G = (V, E)$ is said to be \textit{word-representable} if a word $w$ can be formed using the letters of the alphabet $V$ such that for every pair of vertices $x$ and $y$, $xy \in E$ if and only if $x$ and $y$ alternate in $w$. Gaetz and Ji have recently introduced the notion of minimum length word-representants for word-representable graphs. They have also determined the minimum possible length of the word-representants for certain classes of graphs, such as trees and cycles. It is know that Cartesian and Rooted products preserve word-representability. Moreover, Broere constructed a uniform word representing the Cartesian product of $G$ and $K_n$ using occurrence based functions.
	
	In this paper, we study the minimum length of word-representants for Cartesian and Rooted products using morphism and occurrence based function, respectively. Also, we solve an open problem posed by Broere in his master thesis. This problem asks to construct a word for the Cartesian product of two arbitrary word-representable graphs.
\end{abstract}
\textbf{Keywords: } word-representability, minimum length word-representant, cartesian product, rooted product.

\pagestyle{myheadings}
\markboth{Eshwar Srinivasan et.al. }{Minimum length word-representants of graph products}

\section{Introduction}

	The theory of word-representable graphs is a very promising research area and provides an interesting way to analyze and understand graphs using words. The notion of word-representable graphs was introduced by Kitaev, which was motivated by the study of Perkins semigroups in \cite{kitaev2008word}. After the introduction of this notion, many results have been done in this area. In \cite{gaetz2020enumeration}, Gaetz and Ji have recently introduced the notion of minimum length word-representants for word-representable graphs. The concept of minimum length of the word-representants of word-representable graphs are very interesting, as it relates structural properties of the graph with its minimum length word-representants. In \cite{SRINIVASAN2024149} (\textit{Theorem }\ref{di}), the authors have found a relation between the occurrence of letters in minimum length word-representant and the diameter of the graph. In this paper, we study the minimum length of word-representants for Cartesian and Rooted products using morphism and occurrence based function, respectively. Also, we solve an open problem posed by Broere in \cite{broere2018word}. This problem asks to construct a word for the Cartesian product of two arbitrary word-representable graphs.
	We begin with a brief review of basic definitions and results on word-representable graphs. We refer the reader to \cite{kitaev2017comprehensive} and \cite{kitaev2015words}, for a detailed treatment. All graphs considered here are simple and undirected.

Suppose that $w$ is a word over some alphabet, and $x$ and $y$ are two distinct letters in $w$. We say that $x$ and $y$ \textit{alternate} in $w$ if after deleting all letters except the copies of $x$ and $y$ in $w$, we either obtain a word $xyxy\ldots$ (of odd or even length) or a word $yxyx\ldots$ (of odd or even length). Hence by definition, if $w$ has a single occurrence of $x$ and a single occurrence of $y$, then $x$ and $y$ alternate in $w$.
\begin{definition}[\cite{kitaev2015words}, Definition 3.0.5]\label{wr}
	A graph $G = (V, E)$ is said to be \textit{word-representable} if a word $w$ can be formed using the letters of the alphabet $V$ such that for every pair of vertices $x$ and $y$, $xy \in E$ if and only if $x$ and $y$ alternate in $w$. We say that $w$ \textit{represents} $G$, and $w$ is called a \textit{word-representant} of $G$. Also, it is essential that $w$ contains each letter of $V$ at least once.
\end{definition}

A word is called \textit{k-uniform} if each letter occurs exactly $k$ times in it. A graph $G$ is \textit{$k$-word-representable} if it can be represented by a $k$-uniform word. The least $k$ for which a word-representant of a graph $G$ is $k$-uniform is called the \textit{representation number} of the graph $G$, and it is denoted by $\mathcal{R}(G)$.
\begin{theorem}[\cite{kitaev2008representable}, Theorem 7]
	A graph is word-representable if and only if it is $k$-word-representable for some $k$
\end{theorem}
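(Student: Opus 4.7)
The ``if'' direction is immediate from Definition \ref{wr}, since every $k$-uniform word-representant is, in particular, a word-representant of $G$.

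For the converse, given a word $w$ over $V(G)$ representing $G$, the plan is to produce a $k$-uniform word-representant of $G$ for some $k$. Write $n_v = |w|_v$, $M = \max_v n_v$, and define the \emph{initial permutation} $\pi(w)$ of $w$ to be the sequence of first occurrences of the letters of $V(G)$ in $w$. The first step is the key lemma: $\pi(w)\cdot w$ also represents $G$. The verification is pairwise; for any two letters $x,y$ with the first occurrence of $x$ preceding that of $y$ in $w$, we have $\pi(w)|_{\{x,y\}} = xy$ and $w|_{\{x,y\}}$ begins with $x$, so prepending $xy$ to a $\{x,y\}$-word starting with $x$ preserves whether or not it alternates. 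Iterating, $\pi(w)^m \cdot w$ represents $G$ for every $m\ge 0$, with letter $v$ now occurring $n_v+m$ times.

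Since this uniform padding alone cannot equalise the multiplicities, the second step is an \emph{insertion lemma}: whenever $w$ represents $G$ and $n_v<M$ for some $v$, there is a position in $w$ at which one can insert an extra copy of $v$ so that the resulting word still represents $G$. Iterating this step drives the imbalance $\sum_v (M-n_v)$ down to zero and produces an $M$-uniform representant of $G$.

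The main obstacle is the insertion lemma itself: one must locate a position which is simultaneously safe for every $u\in N(v)$ (where the alternation of $u$ and $v$ must be preserved) and for every $u\notin N(v)$ (where their non-alternation must also be preserved). The key structural observation is that alternation of $v$ with each neighbor $u$ forces exactly one copy of $u$ between any two consecutive copies of $v$ in $w$; a case analysis on the positions of the last occurrences of $v$ and its neighbors in $w$ then identifies a canonical ``safe slot'' in which the extra copy of $v$ may be inserted. When a direct insertion fails, one first applies the initial-permutation lemma a few times to create a more accommodating configuration and then inserts, which suffices to complete the induction.
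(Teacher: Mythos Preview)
The paper does not supply its own proof of this theorem; it is quoted from \cite{kitaev2008representable} as a preliminary fact, so there is no in-paper argument to compare against.

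On your sketch itself: the ``if'' direction and the lemma that $\pi(w)w$ represents $G$ (this is exactly Observation~\ref{pw}) are fine. The real content is your insertion lemma, and here the write-up is a promissory note rather than a proof. You assert that ``a case analysis on the positions of the last occurrences of $v$ and its neighbors'' locates a safe slot, and that when it does not, iterating $w\mapsto\pi(w)w$ rescues the situation; neither claim is substantiated. In particular, prepending $\pi(w)$ raises \emph{every} multiplicity by one and so never changes the defect $\sum_v(M-n_v)$; if it is to help at all you would have to argue that it alters the \emph{structure} of $w$ so as to create a safe slot, and you do not.

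The clean way to close the gap is to be specific about \emph{which} letter to augment and \emph{where}. Choose $v$ of minimum multiplicity and, among such letters, the one whose last occurrence in $w$ is earliest; then append a single $v$ at the very end of $w$. For every neighbour $u$ of $v$ one has $n_u\in\{n_v,n_v+1\}$ (forced by alternation), and in either case the choice of $v$ guarantees that $w|_{\{u,v\}}$ ends in $u$, so $w|_{\{u,v\}}\,v$ still alternates; for every non-neighbour $u$, $w|_{\{u,v\}}$ already contains a repeated letter, which appending $v$ cannot remove. This decreases the defect by one without increasing $M$, and iterating gives an $M$-uniform representant. Your outline gestures in this direction (``last occurrences of $v$ and its neighbors'') but stops short of the decisive choice of $v$ and of the insertion position, and the $\pi(w)$ fallback is a red herring.
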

\begin{prop}[\cite{kitaev2015words}, Proposition 3.2.7]\label{uv}
	Let $w = uv$ be a $k$-uniform word representing a graph $G$, where $u$ and $v$ are two, possibly empty, words. Then the word $w' = vu$ also represents $G$.
\end{prop}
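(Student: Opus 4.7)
The plan is to show that for any two distinct letters $x, y \in V$, they alternate in $w = uv$ if and only if they alternate in $w' = vu$. Since $w$ represents $G$, this equivalence would immediately imply that the pairs alternating in $w'$ are exactly the edges of $G$, and moreover $w'$ contains every letter of $V$ (because it is a rearrangement of $w$), so $w'$ represents $G$.

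First I would make explicit the observation that $w'$ is just a cyclic rotation of $w$: reading the letters of $w$ around a circle, $w'$ corresponds to starting the read at position $|u|+1$ instead of position $1$. Consequently, for every pair of letters $x, y$, the projection $w'|_{\{x,y\}}$ obtained by deleting all letters other than $x$ and $y$ is the cyclic rotation of $w|_{\{x,y\}}$ that moves its first $m$ symbols to the end, where $m$ is the number of occurrences of $x$ or $y$ in $u$.

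Next I would use $k$-uniformity: because each of $x$ and $y$ occurs exactly $k$ times in $w$, the projection $w|_{\{x,y\}}$ is a binary word of length $2k$ with $k$ copies of each letter, and similarly for $w'|_{\{x,y\}}$. I claim that such a word is alternating (in the linear sense) if and only if it has no two cyclically adjacent equal letters, i.e., it is alternating when arranged on a circle. Indeed, if $s_1 s_2 \cdots s_{2k}$ is linearly alternating with equal counts, then necessarily $s_1 = s_3 = \cdots = s_{2k-1}$ and $s_2 = s_4 = \cdots = s_{2k}$, so $s_{2k} \neq s_1$ and the circular arrangement is also alternating; the converse is trivial. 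This is the step where equal occurrences are indispensable (e.g., the non-uniform word $xy$ is linearly alternating but its rotation $yx$ would lose the cyclic property without the counting argument).

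The delicate point of the argument is precisely this passage from linear to cyclic alternation, because without it one would not be able to transfer alternation through the cut point between $v$ and $u$. Once this equivalence is in hand, the remainder is routine: cyclic alternation is manifestly preserved under cyclic rotation, so $w|_{\{x,y\}}$ is alternating iff $w'|_{\{x,y\}}$ is alternating for every pair $\{x,y\}$, yielding the proposition.
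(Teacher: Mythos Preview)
The paper does not supply its own proof of this proposition; it is quoted from \cite{kitaev2015words} as background and left unproved. So there is nothing in the paper to compare your argument against.

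On its own merits your proof is correct. The key observation that a binary word of length $2k$ with exactly $k$ copies of each symbol is linearly alternating if and only if it is cyclically alternating is exactly what makes the cyclic-rotation argument go through, and you state and justify it properly. One quibble: your parenthetical example is off. The word $xy$ has equal counts and \emph{is} cyclically alternating, so its rotation $yx$ remains alternating; the phenomenon you want to illustrate requires unequal counts, e.g.\ $xyx$ is linearly alternating but its rotation $xxy$ is not, precisely because the circular arrangement $x\,y\,x$ already has the two end $x$'s adjacent. Fixing that example would make the aside actually support the point you are making.
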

\begin{definition}[\cite{kitaev2015words}, Definition 3.0.13]
	The reverse of the word $w = w_1w_2 \ldots w_n$ is the word \\$r(w) = w_n \ldots w_2 w_1$.
\end{definition}
\begin{prop}[\cite{kitaev2015words}, Proposition 3.0.14]\label{rw}
	If $w$ is a word-representant of a graph $G$, then $r(w)$ also represents the graph $G$.
\end{prop}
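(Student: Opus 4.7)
The plan is to reduce the statement to a single elementary observation: the notion of alternation between two letters is invariant under reversal of the word. Once that is established, the conclusion follows immediately from the definition of word-representability.

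First, I would fix two arbitrary distinct letters $x, y \in V$ and consider $w|_{xy}$, the subword obtained from $w$ by deleting every letter other than $x$ and $y$. A straightforward induction on the length of $w$ (or a direct positional argument) shows that deletion commutes with reversal, namely $r(w)|_{xy} = r(w|_{xy})$. This is really the only structural fact needed, and it is essentially bookkeeping on indices: a letter survives the deletion in $w$ precisely when it survives in $r(w)$, and its position is simply mirrored.

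Next, I would verify that the reverse of an alternating word over a two-letter alphabet is still alternating. If $w|_{xy}$ is $xyxy\cdots$ of some length, then reversing it yields a word that is either $\cdots yxyx$ or $\cdots xyxy$, depending on parity; in every case it is of the form $yxyx\cdots$ or $xyxy\cdots$, which is exactly the definition of alternation. The same applies if $w|_{xy}$ starts with $y$.

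Combining the two observations, $x$ and $y$ alternate in $w$ if and only if they alternate in $r(w)$. Since $w$ represents $G$, we have $xy \in E \iff x,y$ alternate in $w \iff x,y$ alternate in $r(w)$, so $r(w)$ represents $G$ as well. I do not anticipate a genuine obstacle here; the only mild subtlety is being careful with the parity cases when verifying that reversal preserves the alternating pattern, but this is a finite case check rather than a real difficulty.
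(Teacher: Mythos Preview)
Your argument is correct: restriction to a two-letter subalphabet commutes with reversal, and reversing an alternating two-letter word yields an alternating word, so alternation of any pair is preserved under reversal and hence $r(w)$ represents the same graph as $w$. Note, however, that the paper does not supply its own proof of this proposition; it is quoted as a known result from \cite{kitaev2015words}, so there is no in-paper argument to compare against. Your proof is the standard one and would serve perfectly well.
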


For a word $w$, suppose that $\pi(w)$ is the permutation obtained from $w$ after removing all but its leftmost occurrence of each letter $x$. We call $\pi(w)$ as the \textit{initial permutation} of $w$. Similarly, suppose that $\sigma(w)$ is the permutation obtained from $w$ after removing all but its rightmost occurrence of each letter $x$. We call $\sigma(w)$ as the \textit{final permutation} of $w$. Furthermore, a word $w$ restricted to certain letters $x_1, \ldots, x_m$ is denoted by $w|_{\{x_1, \ldots, x_m\}}$. For instance, if $w = 35423214$, then $\pi(w) = 35421$, $\sigma(w) = 53214$, and $w|_{\{1,2\}} = 221$.

\begin{observation}[\cite{kitaev2008representable}, Observation 4]\label{pw}
	Let $w$ be a word-representant of $G$. Then $\pi(w)w$ also represents $G$.
\end{observation}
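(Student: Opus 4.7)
The goal is to show that for every pair of distinct vertices $x,y$, alternation of $x$ and $y$ in $\pi(w)w$ is equivalent to alternation in $w$. Since alternation is a property of the restricted word $w|_{\{x,y\}}$, my plan is to carry out a pair-by-pair analysis using the fact that restriction distributes over concatenation, i.e.\ $\pi(w)w|_{\{x,y\}} = \pi(w)|_{\{x,y\}} \cdot w|_{\{x,y\}}$.

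Fix $x$ and $y$. Without loss of generality (by symmetry), I will assume that the leftmost occurrence of $x$ in $w$ precedes the leftmost occurrence of $y$. Under this assumption two structural facts drop out immediately from the definitions of $\pi(w)$ and of restriction: first, $\pi(w)|_{\{x,y\}} = xy$, because the initial permutation preserves the order of first appearances and each letter occurs exactly once in it; second, $w|_{\{x,y\}}$ begins with $x$, because $x$ occurs in $w$ before any $y$ does. These two facts together yield the identity $\pi(w)w|_{\{x,y\}} = xy \cdot w|_{\{x,y\}}$, with the junction letters being $y$ (end of the prepended block) and $x$ (start of $w|_{\{x,y\}}$), i.e.\ no new equal-letter adjacency is introduced at the seam.

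From here the two directions are routine. If $x$ and $y$ alternate in $w$, then $w|_{\{x,y\}}$ has the form $xyxy\ldots$, so prepending $xy$ yields $xyxyxy\ldots$, which still alternates; hence $x,y$ alternate in $\pi(w)w$. Conversely, if $x$ and $y$ do not alternate in $w$, then $w|_{\{x,y\}}$ contains an $xx$ or $yy$ factor, and this factor survives in $\pi(w)w|_{\{x,y\}}$, so $x,y$ fail to alternate in $\pi(w)w$. Since $xy \in E$ iff $x$ and $y$ alternate in $w$ (by hypothesis), the same equivalence holds for $\pi(w)w$, proving that $\pi(w)w$ represents $G$.

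The only place that needs care, and which I regard as the main (though mild) obstacle, is ruling out a spurious $xx$ or $yy$ adjacency created at the junction between the prepended permutation $\pi(w)$ and the body $w$; this is precisely why the ordering convention (WLOG $x$ before $y$) matters, as it forces the junction to read $\ldots yx\ldots$ after restriction rather than $\ldots xx\ldots$. Once this is observed the argument is purely mechanical and handles every pair uniformly, giving the result.
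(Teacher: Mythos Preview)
Your argument is correct: the restriction identity $\pi(w)w|_{\{x,y\}}=\pi(w)|_{\{x,y\}}\cdot w|_{\{x,y\}}=xy\cdot w|_{\{x,y\}}$ (under the WLOG convention that $x$ first appears before $y$), together with the observation that $w|_{\{x,y\}}$ begins with $x$, cleanly handles both directions, and you correctly isolate the only delicate point, namely that the seam reads $\ldots yx\ldots$ and so cannot create a new doubled letter. The paper does not supply its own proof of this statement; it is recorded as a cited observation from \cite{kitaev2008representable}, so there is no in-paper argument to compare against, but your proof is the standard one.
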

\par In \cite{gaetz2020enumeration}, Gaetz and Ji have studied the absolute minimum length word-representants of certain classes of graphs, such as trees and cycles.
\begin{definition}[\cite{gaetz2020enumeration}, Definition 2.2]
	Let $G$ be a word-representable graph with $w$ being a word-representant of $G$, then $l(G)$ is defined as the minimum possible length of the word $w$.
\end{definition}
\begin{definition}
	Let $w$ be a word. $O(w, i)$ is defined as the set of letters of $w$ which occur exactly $i$ times in it. 
\end{definition}
\begin{definition}
	Let $w$ be a word-representant for the graph $G=(V, E)$. Let $x \in V$. Define $O_w(x)$ as the number of occurrences of the letter $x$ in the word $w$.
\end{definition}
\begin{example}
	In the word $w = 322414$, $O_w(3) = 1$ and $O_w(2) = 2$. $O(w,2) = \{2, 4\}$ and $O(w, 1) = \{1, 3\}$.
\end{example}
\begin{prop}[\cite{kitaev13}, Proposition 1]\label{xwx}
	Let $w = w_1xw_2xw_3$ be a word-representant of a graph $G$ such that $w_1$, $w_2$ and $w_3$ are possibly empty words, and $w_2$ contains no $x$. Then the possible neighbours of $x$ in $G$ are the letters in $O(w_2,1)$.
\end{prop}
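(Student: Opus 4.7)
The plan is to show that every actual neighbour of $x$ must lie in $O(w_2,1)$, which is exactly what the statement means by ``possible neighbours.'' I would start by fixing an arbitrary neighbour $y$ of $x$ in $G$, so that by Definition \ref{wr}, $x$ and $y$ alternate in $w$.

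Next I would make the following key observation about the given decomposition $w = w_1 x w_2 x w_3$: since $w_2$ contains no copy of $x$, the two explicitly displayed occurrences of $x$ are consecutive occurrences of $x$ in the full word $w$ (regardless of how many times $x$ appears in $w_1$ or $w_3$). This is the pivot of the argument; it lets me localise the alternation condition to the block $w_2$.

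From here the conclusion is immediate. The restriction $w|_{\{x,y\}}$ is, by alternation, either $xyxy\cdots$ or $yxyx\cdots$. In either pattern, between any two consecutive $x$'s there sits exactly one $y$. Applying this to the pair of consecutive $x$'s identified above forces $y$ to occur exactly once in $w_2$, i.e.\ $y \in O(w_2,1)$. Taking the contrapositive, any letter not in $O(w_2,1)$ fails to alternate with $x$ and therefore cannot be a neighbour of $x$; this is the desired statement.

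There is no real obstacle here — the only subtle point is the recognition that the two marked $x$'s must be consecutive occurrences of $x$ in $w$, which is what allows the alternation condition to be read off directly from the single block $w_2$. I would also briefly note that the word ``possible'' is essential in the statement: a letter $y \in O(w_2,1)$ need not actually be a neighbour of $x$, because $y$ may fail to alternate with $x$ somewhere in $w_1$ or $w_3$, e.g.\ by appearing twice between two consecutive $x$'s sitting inside $w_1$.
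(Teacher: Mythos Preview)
Your argument is correct: the key observation that the two displayed $x$'s are consecutive occurrences of $x$ (since $w_2$ is $x$-free) immediately forces any alternating $y$ to appear exactly once in $w_2$, and your closing remark on why ``possible'' is needed is apt. Note that the paper does not actually prove this proposition---it is quoted from \cite{kitaev13} without argument---so there is nothing to compare against; your short direct proof is the standard one.
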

\begin{lemma}[\cite{SRINIVASAN2024149}, Lemma 1.12]\label{kap}
	Let $w$ be a minimum length word-representant of a word-representable graph $G$. Then
	\begin{equation*}
		|O(w, 1)| \le \kappa_G.
	\end{equation*}
	where $\kappa_G$ is the size of a maximal clique of $G$.
\end{lemma}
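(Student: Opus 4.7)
The plan is to show that the set of letters $O(w,1)$ itself forms a clique in $G$, which immediately gives the bound since $\kappa_G$ bounds the size of any clique. Notably, the minimum length hypothesis on $w$ will not actually be needed; the result will follow for any word-representant.

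First I would invoke the convention recalled just before Definition~\ref{wr}: \emph{if $w$ has a single occurrence of $x$ and a single occurrence of $y$, then $x$ and $y$ alternate in $w$}. Thus, for any two distinct letters $x, y \in O(w,1)$, each appears exactly once in $w$, so by this convention the restriction $w|_{\{x,y\}}$ is a two-letter word and $x$ and $y$ alternate trivially.

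Next, since $w$ represents $G$ in the sense of Definition~\ref{wr}, the alternation of $x$ and $y$ in $w$ is equivalent to $xy \in E$. Consequently, every pair of distinct letters in $O(w,1)$ is adjacent in $G$, so $O(w,1)$ induces a complete subgraph of $G$. Therefore $|O(w,1)|$ is bounded above by the size of the largest clique in $G$, namely $\kappa_G$, which yields the desired inequality.

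There is essentially no obstacle here; the single observation doing all the work is that letters with a single occurrence vacuously alternate, so $O(w,1)$ is automatically a clique. The only mild subtlety is interpretive: $\kappa_G$ must denote the clique number (the size of a maximum clique), for otherwise the inequality could be false when the clique $O(w,1)$ happens to be larger than some particular maximal (non-maximum) clique. Under the standard interpretation of $\kappa_G$ as the clique number, the argument above is complete.
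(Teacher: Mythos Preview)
Your argument is correct: the letters in $O(w,1)$ pairwise alternate trivially (each appears once), hence are pairwise adjacent in $G$ by the definition of word-representation, so $O(w,1)$ is a clique and its size is bounded by the clique number $\kappa_G$. You are also right that the minimality of $w$ plays no role, and that $\kappa_G$ must be read as the size of a \emph{maximum} clique (as indeed the paper does in its later applications of this lemma). Note that the paper does not supply its own proof of this statement; it is quoted from \cite{SRINIVASAN2024149}, so there is no in-paper argument to compare against. Your proof is the standard one-line justification.
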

\begin{definition}[\cite{SRINIVASAN2024149}, Definition 2.20]
	Let $w$ be a word. The minimum and maximum number of occurrences of a letter in $w$ are denoted by $O_{min}(w)$ and $O_{max}(w)$, respectively.
\end{definition}
	\begin{theorem}[\cite{SRINIVASAN2024149}, Theorem 2.22]\label{di}
	Let $w$ be a minimum length word-representant of a word-representable connected graph $G$. Then
	\begin{equation*}
		O_{max}(w) - O_{min}(w) \le diam(G)
	\end{equation*}
	where $diam(G)$ is the diameter of the graph $G$.
\end{theorem}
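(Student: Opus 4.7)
The plan is to exploit the alternation property directly, without any machinery particular to minimum length words, so the minimum length hypothesis will only serve to define the object under discussion. Specifically, I would pick a letter $x$ realising $O_{max}(w)$ and a letter $y$ realising $O_{min}(w)$, and then travel along a shortest $x$--$y$ path in $G$, controlling the change in occurrence count one edge at a time.

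First I would record the elementary fact that whenever two distinct letters $a$ and $b$ alternate in a word $w$, one has $|O_w(a)-O_w(b)|\le 1$; this is immediate from the definition of alternation, since after restriction the word looks like $abab\ldots$ or $baba\ldots$, whose letter counts can differ by at most $1$. Because $w$ represents $G$, every edge $ab\in E(G)$ gives such an alternating pair, so adjacency in $G$ forces the occurrence counts to differ by at most one.

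Next, since $G$ is connected, I would take a shortest path $x=v_0, v_1, \ldots, v_d=y$ in $G$ with $d=d_G(x,y)\le\mathrm{diam}(G)$. Each consecutive pair $v_{i-1}v_i$ is an edge, so $|O_w(v_{i-1})-O_w(v_i)|\le 1$. A telescoping argument then gives
\begin{equation*}
O_{max}(w)-O_{min}(w) \;=\; O_w(x)-O_w(y) \;=\; \sum_{i=1}^{d}\bigl(O_w(v_{i-1})-O_w(v_i)\bigr) \;\le\; d \;\le\; \mathrm{diam}(G),
\end{equation*}
which is the desired inequality.

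There is no real obstacle: the argument rests entirely on the one-line lemma that alternating letters have occurrence counts within $1$ of each other, together with connectedness to guarantee the existence of the path. The minimum length hypothesis is not invoked in the proof (the inequality in fact holds for any word-representant of a connected word-representable graph), which suggests the statement is recorded for $l(G)$-length words because that is the regime of interest in the paper.
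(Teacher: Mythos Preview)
Your argument is correct. The key lemma---that two letters alternating in a word have occurrence counts differing by at most $1$---is immediate from the definition, and the telescoping along a shortest path is sound. Your observation that the minimum-length hypothesis is never invoked is also accurate: the inequality holds for \emph{any} word-representant of a connected word-representable graph.

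However, there is nothing to compare against here: this paper does not prove Theorem~\ref{di}. It is quoted verbatim from \cite{SRINIVASAN2024149} (as Theorem~2.22 there) and used only as background in the introduction. If you want a genuine comparison you would need to consult that reference; the present paper offers no proof of its own for this statement. That said, your path-and-telescope argument is the natural approach and almost certainly coincides with the original.
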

In this paper, we denote the concatenation of words $w_1w_2\ldots w_n$ as $\displaystyle\prod_{i =1}^{n} w_i$, where $w_i$ are possibly empty words. For any graph $G$, $|G|$ denotes the \textit{order} of the graph. For a word $w$, $|w|$ denotes the length of the word. In section 2, we find an upper bound for the minimum length of words representing the cartesian product of word-representable graphs in terms of their minimum length. Similarly, in section 3, we establish an upper bound for the minimum length of words representing the Rooted product of word-representable graphs in terms of their minimum length.

\section{Cartesian Products}
Cartesian product is a graph operation that preserves word-representability. In \cite{BroereZ19}, Broere and Zantema have constructed a uniform word representing the Cartesian product of word-representable graph with a complete graph using occurrence based function. In this section, we give an upper bound for the cartesian product of word-representable graphs $G$ with $K_2$, and $K_n$ in terms of $l(G)$. In addition, we solve an open problem \textit{(Question 6.10)} posed in \cite{broere2018word}, by finding a word representing the Cartesian product of two arbitrary word-representable graphs. Moreover, we give an upper bound for the minimum length of the word-representants of the Cartesian product of two arbitrary word-representable graphs. In this section, we will be using $u^v$ to denote the ordered pair of vertices, $(u,v) $, of $G \: \square \: H$, where $u \in V(G)$ and $v \in V(H)$.
\begin{definition}[\cite{kitaev2015words}, \textit{Definition 5.4.8}]
	The \textit{Cartesian product} of two graphs $G = (V(G), E(G))$ and $H = (V(H), E(H))$ is a graph $G \: \square \: H = (V(G \: \square \: H) , E(G \: \square \: H))$, where $V(G \: \square \: H) = V(G) \times V(H)$ and\\
	 $E(G \: \square \: H) = \{((u, u'), (v,v'))\: | \: u=u' \textit{ and } (u',v') \in E(H) \textit{ or } v=v' \textit{ and } (u,v) \in E(G)\}$.
\end{definition}
\begin{theorem}[\cite{kitaev2015words}, \textit{Theorem 5.4.10}]
    Let $G$ and $H$ be two word-representable graphs. Then the Cartesian product $G \: \square \: H$ is also word-representable.
\end{theorem}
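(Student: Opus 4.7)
My plan is to construct an explicit word-representant of $G\square H$ from uniform word-representants of $G$ and $H$ by substitution, and then to verify the construction by case analysis on the three possible types of vertex pairs in $G\square H$.

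\textbf{Setup.} By the equivalence between word-representability and $k$-word-representability, I would take $k$-uniform representants $w_G$ of $G$ and $w_H$ of $H$; by replacing each word with a self-concatenation (which preserves the represented graph, since any two letters alternate in $w$ if and only if they alternate in $ww$) I may assume $w_G$ and $w_H$ share a common uniformity $k$. Decompose them into permutation blocks $w_G=P_1P_2\cdots P_k$ and $w_H=Q_1Q_2\cdots Q_k$, where each $P_i$ is a permutation of $V(G)$ and each $Q_i$ is a permutation of $V(H)$.

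\textbf{Construction and easy cases.} For each $i$ I would build a row-major block $R_i$ over the alphabet $V(G)\times V(H)$ by replacing every letter $u$ of $P_i$ with the factor $u^{Q_i[1]}u^{Q_i[2]}\cdots u^{Q_i[m]}$, where $m=|V(H)|$, and a symmetric column-major block $S_i$ by traversing $Q_i$ and listing the letters $u^v$ in $P_i$-order for each $v$. The candidate word $w$ is an interleaving of these blocks, possibly with further auxiliary factors. Consider a pair of distinct letters $u^v,u'^{v'}$ of $V(G\square H)$. If $u=u'$ and $v\ne v'$, then within each $R_i$ the two letters lie in the same $u$-expansion and are ordered by $Q_i|_{\{v,v'\}}$; aggregating across $i$ shows that $w|_{\{u^v,u^{v'}\}}$ mirrors $w_H|_{\{v,v'\}}$, so the pair alternates in $w$ if and only if $vv'\in E(H)$. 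The symmetric case $v=v'$, $u\ne u'$ inherits alternation from $w_G|_{\{u,u'\}}$ through the row structure of each $R_i$.

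\textbf{Main obstacle.} The hard case is the diagonal one, $u\ne u'$ and $v\ne v'$, where the two vertices of $G\square H$ are non-adjacent and must therefore fail to alternate in $w$. Within a row-major block $R_i$ the pair is ordered by $P_i|_{\{u,u'\}}$ alone, while within a column-major block $S_i$ it is ordered by $Q_i|_{\{v,v'\}}$ alone. When $uu'\in E(G)$ and $vv'\in E(H)$ both hold, each of these orderings is constant in $i$, and a naive interleaving can accidentally produce a perfectly alternating restricted word whenever the two constants ``align''. The crux of the argument is to augment the concatenation with auxiliary blocks, for instance row-major blocks using the reversed inner order $r(Q_i)$, or a prepended copy of the initial permutation $\pi(w')$ allowed by Observation~\ref{pw} together with Proposition~\ref{rw}, chosen so as to force a repetition on every diagonal pair without disturbing the same-coordinate cases. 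Proving that such an augmentation simultaneously breaks alternation on every diagonal pair while preserving the two easy cases is the main technical step; once it is in place, the resulting uniform word represents $G\square H$, establishing the theorem.
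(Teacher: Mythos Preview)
The paper itself does not prove this theorem; it is quoted from Kitaev's monograph. The paper's own contribution is the explicit construction in Theorem~\ref{GH}, which in particular re-establishes the result, so I compare your sketch against that.

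Your row-major and column-major blocks are exactly the paper's morphisms $g$ and $J$, and you have correctly isolated the diagonal case $u\neq u'$, $v\neq v'$ as the only genuine difficulty. But the proposal is not a proof: the construction is left unspecified (``possibly with further auxiliary factors''), and the verification of the diagonal case is explicitly deferred (``once it is in place''). The fixes you float---reversing the inner order $Q_i$, or prepending $\pi(w')$---are not shown to break alternation on \emph{every} diagonal pair while preserving the two same-coordinate cases. Concretely, when $uu'\in E(G)$ and $vv'\in E(H)$, the $R$-blocks all give the same relative order of $u^v,u'^{v'}$ and the $S$-blocks all give a (possibly different) constant order; if those two constants happen to agree, any interleaving of $R$'s and $S$'s is alternating on $\{u^v,u'^{v'}\}$, and a global reversal of the inner order simply trades this bad sub-case for the one where the constants disagreed. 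The paper's device is different and more delicate: the inner orders of the successive row-major blocks are the \emph{cyclic shifts} $\pi(w_H,i)=v_iv_{i+1}\cdots v_nv_1\cdots v_{i-1}$, and between consecutive such blocks an extra factor $J^{v_i}(\pi(w_G))$ is inserted; the rotating inner order together with these interposed factors is exactly what forces a repetition on every diagonal pair irrespective of the two constants. Without an analogous mechanism, your outline has a genuine gap at the step you yourself label ``the main technical step''. (A smaller point: an arbitrary $k$-uniform word need not split as $P_1\cdots P_k$ with each $P_i$ a permutation---consider $1122$---so that sentence also requires a citation or a short argument.)
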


The following lemmas will help us in proving the main results.
\begin{lemma}\label{xu}
    Let $w = xUxZ$ be a word-representant of a graph G, where $x \in V(G)$, such that $U$ is a word containing all elements of the set $V(G)\setminus\{x\}$. Then, $w' = UxZ$ also represents the graph $G$.  
\end{lemma}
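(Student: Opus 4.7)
The plan is to show that for each pair $\{a,b\}$ of distinct vertices of $G$, the restriction $w'|_{\{a,b\}}$ alternates if and only if $w|_{\{a,b\}}$ alternates; since $w$ represents $G$ by hypothesis, this will immediately transfer the word-representation property to $w'$. I would split the verification into two cases according to whether one of the two chosen vertices is the distinguished letter $x$.

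The easy case is $a,b \in V(G)\setminus\{x\}$. Passing from $w = xUxZ$ to $w' = UxZ$ only deletes a single copy of $x$, so the restrictions of $w$ and $w'$ to any pair of non-$x$ letters are literally identical, and alternation is trivially preserved.

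The substantive case is when one of the two vertices equals $x$; fix $y \in V(G)\setminus\{x\}$ and compare $w|_{\{x,y\}}$ with $w'|_{\{x,y\}}$. Using the hypothesis that $U$ contains every letter of $V(G)\setminus\{x\}$, I can write
\begin{equation*}
    w|_{\{x,y\}} = x\, y^{k}\, x\, \beta, \qquad w'|_{\{x,y\}} = y^{k}\, x\, \beta,
\end{equation*}
where $k \geq 1$ is the number of occurrences of $y$ in $U$ and $\beta := Z|_{\{x,y\}}$. I would then split on $k$. If $k \geq 2$, both restrictions contain the block $yy$ (inside $y^{k}$) and therefore both fail to alternate simultaneously. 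If $k = 1$, alternation of $xyx\beta$ reduces to the condition that $\beta$ be a prefix of the infinite alternating string $yxyxyx\ldots$, and the same condition on $\beta$ is exactly what makes $yx\beta$ alternate; so the two restrictions alternate together or fail together.

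The main (mild) obstacle I anticipate is securing $k \geq 1$, which is precisely where the hypothesis that $U$ spans $V(G)\setminus\{x\}$ is used. Without it, a letter $y$ could appear only in $Z$, and then deleting the leading $x$ could manufacture or destroy an alternation between $x$ and $y$ and thus break the representation. Once this hypothesis is in hand, the case analysis above is routine, and combining both cases yields that $w'$ represents $G$.
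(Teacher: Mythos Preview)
Your argument is correct and is essentially the paper's proof: both restrict to pairs $\{x,y\}$, use that $U$ contributes $y^{k}$ with $k\ge 1$ to the restriction, and then split on whether $k=1$ or $k\ge 2$. The only cosmetic difference is that the paper phrases the $k=1$ branch via adjacency in $G$ (adjacent $\Rightarrow$ $y$ occurs once in $U$ and $x,y$ alternate in $xZ$; non-adjacent with $k=1$ $\Rightarrow$ $x,y$ fail to alternate already in $Z$), whereas you argue the equivalence of alternation of $xyx\beta$ and $yx\beta$ directly at the word level; you also spell out the trivial case $a,b\neq x$, which the paper leaves implicit.
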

\begin{proof}
    Let $y \in V(G)$. Suppose that $x$ is adjacent to $y$. This implies $y$ occurs exactly once in $U$, and $x$, $y$ alternate in $xZ$. Therefore, $w'|_{\{x,y\}} = yxZ|_{\{x,y\}}$. Hence, $x$ and $y$ alternate in $w'$ as well. 

    Suppose that $x$ is not adjacent to $y$. Then either $y$ occurs once or more than once in $U$, as $U$ contains all elements of $V(G) \setminus\{x\}$. If $y$ occurs only once in $U$, then $x$ and $y$ do not alternate $Z$ as they do not alternate in $w$. Hence, $x$ and $y$ do not alternate in $w' = UxZ$. If $y$ occurs more than once in $U$, then $x$ and $y$ do not alternate in $w'= UxZ$. So, $w' = UxZ$ also represents $G$.
\end{proof}
\begin{lemma}\label{vx}
    Let $w = UxZx$ be a word-representant of a graph G, where $x \in V(G)$, such that $Z$ is a word containing all elements of the set $V(G)\setminus\{x\}$. Then, $w' = UxZ$ also represents the graph $G$.  
\end{lemma}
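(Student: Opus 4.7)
The plan is to reduce Lemma \ref{vx} to Lemma \ref{xu} via the reverse operation, exploiting the fact that $w = UxZx$ is (up to reversal) of exactly the form $xU'xZ'$ required by Lemma \ref{xu}.

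More precisely, I would start by applying Proposition \ref{rw}: since $w = UxZx$ represents $G$, its reverse
\begin{equation*}
r(w) \;=\; r(x)\,r(Z)\,r(x)\,r(U) \;=\; x\,r(Z)\,x\,r(U)
\end{equation*}
also represents $G$. Setting $U' = r(Z)$ and $Z' = r(U)$, we have $r(w) = xU'xZ'$, and because $Z$ contains every letter of $V(G)\setminus\{x\}$, so does $U' = r(Z)$. Hence Lemma \ref{xu} applies to $r(w)$ and tells us that
\begin{equation*}
U'xZ' \;=\; r(Z)\,x\,r(U)
\end{equation*}
represents $G$. Applying Proposition \ref{rw} once more, the reverse of this word,
\begin{equation*}
r\bigl(r(Z)\,x\,r(U)\bigr) \;=\; U\,x\,Z \;=\; w',
\end{equation*}
represents $G$ as well, which is the desired conclusion.

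There is no real obstacle here: the statement is the left--right symmetric counterpart of Lemma \ref{xu}, and the reverse operation carries one into the other cleanly. The only point one has to be careful about is that the hypothesis ``$Z$ contains all elements of $V(G)\setminus\{x\}$'' is preserved under reversal (which is immediate, since reversing does not change which letters appear). If one preferred a self-contained argument, one could equally well mimic the proof of Lemma \ref{xu} verbatim, splitting into the cases ``$y$ adjacent to $x$'' (in which case $y$ appears once in $Z$ and $x,y$ alternate in $Ux$, so $w'|_{\{x,y\}}$ ends with $xy$ and is alternating) and ``$y$ not adjacent to $x$'' (in which case $y$ occurs either once or more than once in $Z$, and in both subcases non-alternation of $x,y$ in $w'$ is inherited from $w$).
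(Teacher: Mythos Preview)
Your proof is correct and is essentially identical to the paper's own argument: reverse $w$ via Proposition~\ref{rw} to obtain $x\,r(Z)\,x\,r(U)$, apply Lemma~\ref{xu} to strip the leading $x$, and reverse again to recover $w' = UxZ$. The additional remark about mimicking the direct case analysis of Lemma~\ref{xu} is also valid but unnecessary.
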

\begin{proof}
    By \textit{Proposition }\ref{rw}, $r(w) = xr(Z)xr(U)$ also represents the graph $G$. Hence, by \textit{Lemma }\ref{xu}, $r(w') = r(Z)xr(U)$ also represents the graph $G$. Again by \textit{Proposition }\ref{rw}, $w' = UxZ$ also represents the graph $G$. 
\end{proof}

Before establishing the result for Cartesian products, let us introduce certain functions that will be used in the proof.
\begin{definition}
    Let $w_G = u_1u_2 \ldots u_n$ be a word-representant of graph $G$ and $w_H = v_1v_2 \ldots v_m$ be a word-representant of graph $H$.  Define a function $g: V(G)^* \times V(H)^* \rightarrow (V(G) \times V(H))^*$ as,
    \begin{equation*}
        g^{w_H}(w_G) = (u_1^{v_1}u_1^{v_2}\ldots u_1^{v_m}) \ldots (u_n^{v_1}u_n^{v_2}\ldots u_n^{v_m}).
    \end{equation*}
    where $u_i^{v_j} \in V(G) \times V(H)$.
\end{definition}
\begin{definition}
    Let $w_G = u_1u_2 \ldots u_n$ be a word-representant of graph $G$ and $w_H = v_1v_2 \ldots v_m$ be a word-representant of graph $H$.  Define a function $J: V(G)^* \times V(H)^* \rightarrow (V(G) \times V(H))^*$ as,
    \begin{equation*}
        J^{w_H}(w_G) = (u_1^{v_1}u_2^{v_1} \ldots u_n^{v_1}) \ldots (u_1^{v_m}u_2^{v_m} \ldots u_n^{v_m}).
    \end{equation*}
    where $u_i^{v_j} \in V(G) \times V(H)$.
\end{definition}
\begin{prop}\label{gj}
	Let $i$ be a letter and $w_1$ and $w_2$ be any two words. Then,
	$$g^i(w_1)J^i(w_2) = J^i(w_1w_2) = g^i(w_1w_2)$$
\end{prop}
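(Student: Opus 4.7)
The plan is to unwind both function definitions in the special case where the superscript word is a single letter $i$, and observe that in this case $g^i$ and $J^i$ both collapse to the same simple letter-by-letter substitution $u_k \mapsto u_k^i$. Once this is observed, the identity $g^i(w_1)J^i(w_2) = J^i(w_1 w_2) = g^i(w_1 w_2)$ follows because this substitution is a monoid homomorphism $V(G)^* \to (V(G)\times V(H))^*$.

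First I would write $w_1 = u_1 u_2 \ldots u_n$ and $w_2 = u'_1 u'_2 \ldots u'_k$. Applying the definition of $g^{w_H}$ with $w_H = i$ (so $m = 1$, $v_1 = i$), the block $(u_r^{v_1} u_r^{v_2}\ldots u_r^{v_m})$ reduces to the single letter $u_r^{\,i}$, giving $g^i(w_1) = u_1^{\,i} u_2^{\,i} \cdots u_n^{\,i}$. Similarly, applying the definition of $J^{w_H}$ with $w_H = i$, the single block $(u'^{\,v_1}_1 u'^{\,v_1}_2 \ldots u'^{\,v_1}_k)$ reduces to $u'^{\,i}_1 u'^{\,i}_2 \cdots u'^{\,i}_k$, so $J^i(w_2) = u'^{\,i}_1 u'^{\,i}_2 \cdots u'^{\,i}_k$.

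Concatenating, I obtain
\begin{equation*}
g^i(w_1) J^i(w_2) = u_1^{\,i} u_2^{\,i} \cdots u_n^{\,i} \, u'^{\,i}_1 u'^{\,i}_2 \cdots u'^{\,i}_k.
\end{equation*}
On the other hand, $w_1 w_2 = u_1 u_2 \ldots u_n u'_1 u'_2 \ldots u'_k$, and applying either $g^i$ or $J^i$ to this concatenated word (again using $m=1$) produces exactly the same string $u_1^{\,i} u_2^{\,i} \cdots u_n^{\,i} u'^{\,i}_1 u'^{\,i}_2 \cdots u'^{\,i}_k$. Comparing the three expressions finishes the proof.

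There is no real obstacle here: the statement is essentially a definitional sanity check recording that when the second argument is a single letter, the two "row-major" and "column-major" constructions $g$ and $J$ agree, and both are homomorphic in the first argument. The only thing to be careful about is to keep track of the indices $n$, $m$, $k$ when expanding the products in the definitions, so that the collapse $m = 1$ is applied correctly on both sides of each equality.
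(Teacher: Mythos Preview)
Your proof is correct and follows essentially the same approach as the paper: both simply unwind the definitions of $g^i$ and $J^i$ when the superscript is a single letter, observe that each reduces to the substitution $u_r \mapsto u_r^{\,i}$, and then compare the resulting strings. The paper's version is just the terse one-line computation you have spelled out in detail.
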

\begin{proof}
	Let $w_1 = u_1u_2 \ldots u_n$ and $w_2 = v_1v_2 \ldots v_m$. Clearly, $$g^i(w_1)J^i(w_2) = (u_1^i) \ldots (u_n^i)(v_1^i \ldots v_m^i) = (u_1^i \ldots u_n^i)(v_1^i \ldots v_m^i) = J^i(w_1w_2) = g^i(w_1w_2)$$
\end{proof}

As mentioned in the begining of this section, the following results provide an upper bound for the minimum length of the word-representants of the Cartesian product of two word-representable graphs in terms of the minimum length of their word-representants. Hence, it suffices to find a word of certain length that represents the Cartesian product to establish an upper bound for the minimum length. But finding the word-representant is not trivial and easy. Thus, the main proof idea is to find a word using the morphism functions $g(w)$ and $J(w)$ as defined above. The proof ends by verifying whether the above formed word represents the Cartesian product or not.
\begin{theorem}\label{GK2}
    Let $G$ be a word-representable graph with minimum length of its word-representant $l(G)$. Then, minimum length of the word-representants of the graph $G \: \square \: K_2$,
    \begin{equation*}
        l(G \: \square \: K_2) \le 2l(G) + 3|G| - 2.
    \end{equation*}
\end{theorem}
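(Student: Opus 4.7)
The plan is to take a minimum-length word-representant $w_G$ of $G$ with $|w_G|=l(G)$, set $\pi=\pi(w_G)$ and $\sigma=\sigma(w_G)$ (both permutations of $V(G)$, of length $|G|$), and then exhibit an explicit word $w$ over the alphabet $V(G)\times\{1,2\}$ of length at most $2l(G)+3|G|-2$ that represents $G\,\square\, K_2$. The verification is a check of four families of alternation conditions: layer-$1$ pairs $u^1,v^1$, layer-$2$ pairs $u^2,v^2$, rung pairs $u^1,u^2$, and cross pairs $u^1,v^2$ with $u\ne v$.

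The natural core of $w$ is $g^{12}(w_G)$, which has length $2l(G)$ and already handles two of the four conditions simultaneously: for every vertex $u$ the restriction to $\{u^1,u^2\}$ is $(u^1u^2)^{O_{w_G}(u)}$, so the rung alternates; and the restriction to a single layer is exactly $w_G$ with that superscript, so layer-$1$ and layer-$2$ edges behave correctly. The defect of $g^{12}(w_G)$ alone is the cross pairs: for $u\ne v$ the restriction to $\{u^1,v^2\}$ alternates precisely when $u,v$ alternate in $w_G$, that is, precisely when $uv\in E(G)$, whereas in $G\,\square\, K_2$ no cross pair is an edge. So the remaining task is to append short pieces that break the alternation of $u^1,v^2$ for \emph{every} edge $uv$ of $G$ without destroying the rung or layer conditions.

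My first-try construction is $w=\pi^{2}\cdot g^{12}(w_G)\cdot \sigma^{1}\cdot R$, where $R$ is a short corrective block of length $|G|-2$. This gives $|w|=|G|+2l(G)+|G|+(|G|-2)=2l(G)+3|G|-2$, matching the stated bound. The prefix $\pi^{2}$ prepends exactly one $u^{2}$ for each $u$, and the suffix $\sigma^{1}$ appends one $u^{1}$, so the rung restriction grows from $(u^{1}u^{2})^{O_{w_G}(u)}$ into $(u^{2}u^{1})^{O_{w_G}(u)+1}$, still alternating. Preservation of the layer-internal alternations can be argued from Observation~\ref{pw} and Proposition~\ref{uv}, together with the fact that the layer-$1$ subword of $\pi^{2}\cdot g^{12}(w_G)\cdot\sigma^{1}$ reads $w_G\cdot\sigma$, and by reversal (Proposition~\ref{rw}) this represents $G$; symmetrically for layer-$2$ it reads $\pi\cdot w_G$, which represents $G$ by Observation~\ref{pw}. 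The role of $R$ is then solely to destroy, for each edge $uv\in E(G)$, the cross alternation of $u^{1},v^{2}$ by introducing a forced consecutive $u^{1}u^{1}$ or $v^{2}v^{2}$ in the appropriate restricted subword.

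The main obstacle is the construction and verification of $R$: with only $|G|-2$ letters to spend, one must kill cross alternation for every $G$-edge while preserving all the other conditions. I expect the argument to split according to whether $O_{w_G}(u)=1$ or $O_{w_G}(u)\ge 2$, because in the singleton case $\pi^{2}$ and $\sigma^{1}$ supply the \emph{only} additional copies of $u^{2}$ and $u^{1}$ beyond $g^{12}(w_G)$, forcing a very delicate positional analysis. If a direct description of $R$ proves elusive, the backup plan is to start from a more symmetric candidate word of length $2l(G)+3|G|$ (for instance with an extra $\sigma^{2}$ or duplicated $\pi$-block) and then peel off two letters using Lemmas~\ref{xu} and~\ref{vx}, which permit the removal of a leftmost or rightmost occurrence of a letter provided the intervening subword contains every other vertex of $G\,\square\, K_2$---a condition that a symmetric candidate, by construction, should satisfy.
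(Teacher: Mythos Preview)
Your first-try construction does not work, and the gap is not just that $R$ is unspecified---no $R$ of length $|G|-2$ can rescue it. Take $G=K_3$ with $w_G=123$, so $\pi=\sigma=123$ and the budget for $R$ is one letter. The word $\pi^{2}\,g^{12}(w_G)\,\sigma^{1}=1^{2}2^{2}3^{2}\,1^{1}1^{2}2^{1}2^{2}3^{1}3^{2}\,1^{1}2^{1}3^{1}$ leaves \emph{all three} cross pairs $(1^{1},2^{2})$, $(1^{1},3^{2})$, $(2^{1},3^{2})$ alternating (each restricts to a length-$4$ alternating word). A single appended letter can kill at most those cross pairs that involve that letter; moreover, appending any $j^{1}$ immediately breaks the rung $(j^{1},j^{2})$, whose restriction before $R$ already ends in $j^{1}$. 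So the first plan is dead, not merely incomplete. The same obstruction persists for general $G$: whenever $uv\in E(G)$ with $O_{w_G}(u)=O_{w_G}(v)$ and $u$ preceding $v$ in $\pi$, the restriction of $\pi^{2}\,g^{12}(w_G)\,\sigma^{1}$ to $\{u^{1},v^{2}\}$ is $(v^{2}u^{1})^{O_{w_G}(u)+1}$, still alternating.

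Your backup plan is the right shape, but the candidates you name do not work either: duplicating the $\pi^{2}$-block gives $u^{2}u^{2}$ at the start of every rung restriction, destroying all rungs; appending $\sigma^{2}$ to $\pi^{2}\,g^{12}(w_G)\,\sigma^{1}$ adds a trailing $v^{2}$ to the problematic $(u^{1},v^{2})$ restriction above and leaves it alternating. What you are missing is a prefix that \emph{forces} a doubled letter in every cross restriction regardless of adjacency in $G$. The paper achieves this with the length-$3|G|$ prefix $g^{21}(\pi)\cdot J^{2}(\pi)$ in front of $g^{12}(w_G)$: for $j<k$ the restriction to $\{u_j^{1},u_k^{2}\}$ begins $u_j^{1}u_k^{2}\,u_k^{2}\ldots$ (the second $u_k^{2}$ coming from $J^{2}(\pi)$), and the restriction to $\{u_j^{2},u_k^{1}\}$ begins $u_j^{2}u_k^{1}\,u_j^{2}\,u_j^{2}\ldots$ (the last $u_j^{2}$ coming from the first letter of $g^{12}(w_G)$, since $u_j$ precedes $u_k$ in $\pi$). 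This yields a representant of length $2l(G)+3|G|$, after which Lemma~\ref{xu} applied twice (first with $x=u_1^{2}$, then with $x=u_1^{1}$) removes the leading $u_1^{2}u_1^{1}$; both applications are legal because the intervening block $g^{21}(\pi)$ already contains every vertex of $G\,\square\,K_2$. Your instinct to finish via Lemmas~\ref{xu}/\ref{vx} was correct, but you need this interleaved prefix---or an equivalent device---before the peeling step has anything to peel.
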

\begin{proof}
    Let $w_G$ be a word-representant of the graph $G$. Let $V(G) = 
    \{u_1, u_2, \ldots, u_m\}$, $V(K_2) = \{1, 2\}$ and $u_j^{i} \in V(G) \times V(K_2)$. Let $\pi(w_G) = u_1u_2 \ldots u_m$ be the initial permutation of the word $w_G$. Let $w_{K_2} = 12$ be a word-representant of the graph $K_2$. We claim that the word $w_{G \: \square \: K_2} = g^{r(w_{K_2})}(\pi(w_G))J^2(\pi(w_G))g^{w_{K_2}}(w_G)$ represents the graph $G \: \square \: K_2$. By definition of the Cartesian product of graphs, two vertices $u_j^i$ and $u_k^l$ alternate in $w_{G \: \square \: K_2}$ if and only if either $j=k$ and $i$ and $l$ alternate in $w_{K_2}$ or $i=l$ and $u_j$ and $u_k$ alternate in $w_G$. 

    Suppose $j = k$. Then, $w_{G \: \square \: K_2}|_{\{u_j^1,u_j^2\}} = u_j^2u_j^1u_j^2g^{w_{K_2}}(w_G)|_{\{u_j^1,u_j^2\}}$. Hence, $u_j^1$ and $u_j^2$ alternate in $w_{G \: \square \: K_2}$ as $g^{w_{K_2}}(w_G)|_{\{u_j^1,u_j^2\}} = u_j^1u_j^2u_j^1u_j^2\ldots$. 
    
    Suppose $i=l$. Then, $w_{G \: \square \: K_2}|_{\{u_j^i,u_k^i\}} = (g^i(\pi(w_G))J^2(\pi(w_G))g^i(w_G))|_{\{u_j,u_k\}}$ = $g^1(\pi(w_G)w_G|_{\{u_j,u_k\}})$ if $i=1$ and $w_{G \: \square \: K_2}|_{\{u_j^i,u_k^i\}} = J^2(\pi(w_G)\pi(w_G)w_G|_{\{u_j,u_k\}})$ if $i=2$, by \textit{Proposition }\ref{gj}. Hence, by \textit{Observation }\ref{pw}, $u_j^i$ and $u_k^i$ alternate in $w_{G \: \square \: K_2}$ if and only if $u_j$ and $u_k$ alternate in $w_G$. 
    
    Suppose $j \not= k$ and $i \not= l$. Without loss of generality, suppose $j < k$ and $i = 1$, $l = 2$. Then $w_{G \: \square \: K_2}|_{\{u_j^1,u_k^2\}} = u_j^1u_k^2u_k^2g^{w_{K_2}}(w_G)|_{\{u_j^1,u_k^2\}}$. Hence, $u_j^1$ and $u_k^2$ do not alternate in $w_{G \: \square \: K_2}$. Moreover, if $i=2$ and $l=1$, we have $w_{G \: \square \: K_2}|_{\{u_j^2,u_k^1\}} = u_j^2u_k^1u_j^2g^{w_{K_2}}(w_G)|_{\{u_j^2,u_k^1\}}$. As $j<k$, $g^{w_{K_2}}(w_G)|_{\{u_j^2,u_k^1\}} = u_j^2u_k^1\ldots$. Therefore, $w_{G \: \square \: K_2}|_{\{u_j^2,u_k^1\}} = u_j^2u_k^1u_j^2u_j^2u_k^1\ldots$. Hence, $u_j^2$ and $u_k^1$ do not alternate in $w_{G \: \square \: K_2}$. As a result, $w_{G \: \square \: K_2}$ represents the graph $G \: \square \: K_2$. Therefore, $|w_{G \: \square \: K_2}|= 2l(G) + 3m$.
    
    Consider $w_{G \: \square \: K_2} = (u_1^2u_1^1u_2^2u_2^1 \ldots u_n^2u_n^1)(u_1^2 \ldots u_n^2)g^{w_{K_2}}(w_G)$. Since $g^{w_{K_2}}(w_G) = u_1^1u_1^2u_2^1u_2^2 \ldots$, by \textit{Lemma }\ref{xu}, $w'_{G \: \square \: K_2} = (u_2^2u_2^1u_3^2u_3^1 \ldots u_n^2u_n^1)(u_1^2 \ldots u_n^2)g^{w_{K_2}}(w_G)$ also represents $G \: \square \: K_2$. Hence,
    \begin{equation*}
        l(G \: \square \: K_2) \le |w'_{G \: \square \: K_2}| = 2l(G) + 3m - 2.
    \end{equation*}
\end{proof}
The following result gives us more tighter bound for the graph product $K_n \: \square \: K_2$.
\begin{theorem}\label{KnK2}
    The minimum length of the word-representants of the graph $K_n \: \square \: K_2$ for all $n \ge 2$,
    \begin{equation*}
        l(K_n \: \square \: K_2) \le 5n - 4.
    \end{equation*}
\end{theorem}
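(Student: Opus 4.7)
The plan is to start from the word-representant produced by Theorem \ref{GK2} and then chop two further letters from the right end using Lemma \ref{vx}. Since $K_n$ is represented by the word $u_1 u_2 \ldots u_n$, we have $l(K_n) = n$ and $|K_n| = n$, so Theorem \ref{GK2} applied to $G = K_n$ immediately furnishes a word-representant of $K_n \: \square \: K_2$ of length $2n + 3n - 2 = 5n - 2$, namely
\begin{equation*}
w = (u_2^2 u_2^1 u_3^2 u_3^1 \ldots u_n^2 u_n^1)\,(u_1^2 u_2^2 \ldots u_n^2)\,(u_1^1 u_1^2 u_2^1 u_2^2 \ldots u_n^1 u_n^2).
\end{equation*}

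The first trim targets the trailing $u_n^2$. Its previous occurrence is the last letter of the middle block, and the segment strictly between these two copies is $u_1^1 u_1^2 u_2^1 u_2^2 \ldots u_{n-1}^1 u_{n-1}^2 u_n^1$, which lists every element of $V(K_n \: \square \: K_2) \setminus \{u_n^2\}$. Hence Lemma \ref{vx} applies and shortens the word to length $5n - 3$.

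The second trim targets the new trailing letter $u_n^1$, whose previous occurrence is the last letter of the first block. The segment between those two copies is $(u_1^2 u_2^2 \ldots u_n^2)(u_1^1 u_1^2 u_2^1 u_2^2 \ldots u_{n-1}^1 u_{n-1}^2)$, which contributes all superscript-$2$ vertices from the middle block together with $u_1^1, u_2^1, \ldots, u_{n-1}^1$ from the third block, i.e.\ every element of $V(K_n \: \square \: K_2) \setminus \{u_n^1\}$. A second application of Lemma \ref{vx} thus removes this $u_n^1$ as well, producing a word of length $5n - 4$ that represents $K_n \: \square \: K_2$, giving the stated bound.

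There is no serious combinatorial obstacle here; the work is purely bookkeeping---identifying the correct second-to-last occurrence of each trimmed letter and checking that the intervening block contains every other vertex. A quick sanity check at $n = 2$, where the claim becomes $l(C_4) \le 6$, confirms that the two reductions produce a valid word-representant.
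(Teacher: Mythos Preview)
Your proof is correct and follows essentially the same approach as the paper: start from the length $5n-2$ word produced by Theorem~\ref{GK2} with $G=K_n$, then apply Lemma~\ref{vx} twice to delete the trailing $u_n^2$ and then the trailing $u_n^1$. The paper compresses the two applications of Lemma~\ref{vx} into a single sentence, but your more explicit verification of the intervening blocks is exactly the content of that step.
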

\begin{proof}
    We know that, from \textit{Theorem }\ref{GK2}, $w_{K_n \: \square \: K_2} = (2^b2^a3^b3^a \ldots n^bn^a)(1^b \ldots n^b)(1^a1^b \ldots n^an^b)$ represents the graph $K_n \: \square \: K_2$, where $w_{K_n} = 12 \ldots n$, $w_{K_2} = ab$ and $i^k \in V(K_n) \times V(K_2)$. By \textit{Lemma }\ref{vx}, $w'_{K_n \: \square \: K_2} = (2^b2^a3^b3^a \ldots n^bn^a)(1^b \ldots n^b)(1^a1^b \ldots (n-1)^a(n-1)^b)$ also represents the graph $K_n \: \square \: K_2$. Hence as $l(K_n) = n$, 
    \begin{equation*}
        l(K_n \: \square \: K_2) \le |w'_{K_n \: \square \: K_2}| = 5n - 4. 
    \end{equation*}
\end{proof}
The below example justifies the tightness of the upper bound established in the above theorem.
\begin{example}
    We know that $l(K_2 \: \square \: K_2) = l(C_4) = 6$. Based on \textit{Theorem }\ref{KnK2}, we have $l(K_2 \: \square \: K_2) \le 5(2) - 4 = 6$.
\end{example}
\begin{theorem}\label{GKn}
    Let $G$ be a word-representable graph with minimum length of its word-representant $l(G)$. Then, for all $n \ge 3$, the minimum length of the word-representants of the graph $G \: \square \: K_n$,
    \begin{equation*}
        l(G \: \square \: K_n) \le nl(G) + (n^2 -1)|G|.
    \end{equation*}
\end{theorem}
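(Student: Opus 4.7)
The plan is to generalize the explicit construction from Theorem \ref{GK2} to arbitrary $n \ge 3$. Fix a minimum-length word-representant $w_G$ of $G$, write $V(G) = \{u_1, \ldots, u_m\}$ with $\pi(w_G) = u_1 u_2 \cdots u_m$ after relabeling, and let $w_{K_n} = 12 \cdots n$. The main body of the candidate word will be $g^{w_{K_n}}(w_G)$, of length $n l(G)$, exactly as in the $K_2$ case. To it I would prepend a prefix of length $(n^2 - 1)m$ composed of $n-1$ morphism blocks $g^{\sigma_k}(\pi(w_G))$ (each of length $nm$) and the $n-1$ occurrence blocks $J^2(\pi(w_G)), J^3(\pi(w_G)), \ldots, J^n(\pi(w_G))$ (each of length $m$), interleaved so that
$$|w_{G \:\square\: K_n}| = (n-1)nm + (n-1)m + n l(G) = (n^2-1)m + n l(G).$$
The permutations $\sigma_k$ of $V(K_n)$, and the choice between $\pi(w_G)$ and $r(\pi(w_G))$ (valid by Proposition \ref{rw}) as the argument of each $g$-block, are the parameters to be tuned so that the resulting word genuinely represents $G \: \square \: K_n$.

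The verification is a case analysis on pairs of letters $(u_j^a, u_k^b)$ of $V(G \: \square \: K_n)$, restricting the full word to each two-letter alphabet and reading off the resulting alternation. There are three cases to handle: (i) \emph{within-column} pairs $j = k$, $a \ne b$, which must always alternate; (ii) \emph{within-row} pairs $j \ne k$, $a = b$, which must alternate iff $u_j u_k \in E(G)$; and (iii) \emph{cross pairs} $j \ne k$, $a \ne b$, which must never alternate. Proposition \ref{gj} lets us rewrite restricted concatenations cleanly and reduces the analysis to tracking which letters fall at block boundaries. Cases (i) and (ii) go through much as in Theorem \ref{GK2}: the $J^k$-blocks supply the extra occurrences needed to continue the alternation pattern of $g^{w_{K_n}}(w_G)$ across the prefix-body boundary, while the $(n-1)$ copies of $g^{\sigma_k}(\pi(w_G))$ build up the alternating runs that precede it.

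The main obstacle is case (iii), and specifically the sub-case where both superscripts $a, b$ are at least $2$; this situation does not arise for $n = 2$, which is why Theorem \ref{GK2} could make do with a single $J^2$ block. In the problematic sub-case the relevant contributions from $J^a(\pi(w_G))$ and $J^b(\pi(w_G))$ are two distinct letters placed at two distinct positions, and if every $g$-block in the prefix uses $\pi(w_G)$ with the same orientation, the restricted prefix stays alternating and dovetails with the alternating main body, leaving $u_j^a$ and $u_k^b$ spuriously alternating overall. To break the alternation I would use $r(\pi(w_G))$ in at least one of the $g$-blocks, which flips the relative order of $u_j$'s and $u_k$'s contributions in that block and forces a double letter in the cross-pair restriction. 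The delicate part is that this flip must not destroy the within-row alternation of case (ii) nor the within-column alternation of case (i); this is arranged by choosing the accompanying permutations $\sigma_k$ so that within-column restrictions are controlled independently of the $\pi$-versus-$r(\pi)$ choice, and by placing the $J^k$-blocks on the correct side of the flipped $g$-block so that their extra letters complete (rather than break) the desired alternation pattern where needed. Once a consistent interleaving is pinned down and the three cases are verified, possibly followed by an application of Lemma \ref{xu} or Lemma \ref{vx} to trim any residual letters, the length count above yields the claimed bound on $l(G \: \square \: K_n)$.
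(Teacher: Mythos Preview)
Your overall framework---a prefix of $n-1$ blocks $g^{\sigma_r}(\pi(w_G))$ and $n-1$ blocks $J^r(\pi(w_G))$ followed by the main body $g^{w_{K_n}}(w_G)$, yielding the stated length---is exactly the paper's. But your diagnosis of case~(iii) and your proposed remedy are both off.

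You assert that if every $g$-block uses $\pi(w_G)$ the cross-pair restriction necessarily stays alternating, forcing you to reverse some block. This is false for the interleaving the paper actually uses, namely $h^2 J^2 h^3 J^3 \cdots h^n J^n h^1(w_G)$ with $h^r = g^{\sigma_r}(\pi(w_G))$. Restricted to $\{u_j^a,u_k^b\}$ with $j<k$ and $2\le a<b$, every $h^r$ contributes $u_j^a u_k^b$ (the order is forced by $\pi(w_G)$, independent of $\sigma_r$), the block $J^a$ sitting right after $h^a$ contributes a lone $u_j^a$, and the next block $h^{a+1}$ opens with $u_j^a$---so a double $u_j^a u_j^a$ appears and alternation fails. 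The placement of the single-superscript $J$-blocks \emph{between} consecutive $g$-blocks is precisely what breaks case~(iii); no reversal is needed.

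Conversely, your proposed fix---using $r(\pi(w_G))$ in some $g$-block---damages case~(ii). For a within-row pair $\{u_j^i,u_k^i\}$ every unreversed block gives $u_j^i u_k^i$ and a reversed one gives $u_k^i u_j^i$, creating two doubles in the prefix; but only the single block $J^i$ contributes anything extra to this restriction, and one insertion cannot patch two breaks. The hand-wave about choosing $J$-block placement to repair this does not close.

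The idea you are missing is the specific choice $\sigma_r = r(r{+}1)\cdots n\,1\cdots(r{-}1)$ together with the alternating interleaving $h^r J^r$. The cyclic shifts are irrelevant to case~(iii) but are exactly what makes case~(i) work: for a within-column pair $\{u_j^i,u_j^l\}$ with $i<l$ the relative order inside $h^r$ flips at $r=i{+}1$ and flips back at $r=l{+}1$, and the blocks $J^i,J^l$ sitting immediately after $h^i,h^l$ supply the single letters that bridge each flip. With these two choices all three cases go through uniformly with $\pi(w_G)$ throughout.
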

\begin{proof}
    Let $w_G$ be a word-representant of the graph $G$. Let $V(G) = 
    \{u_1, u_2, \ldots, u_m\}$, $V(K_n) = \{1, 2, \ldots, n\}$ and $u_j^{i} \in V(G) \times V(K_n)$. Let $\pi(w_G) = u_1u_2 \ldots u_m$ be the initial permutation of the word $w_G$. Let $\pi(w_{K_n},i) = i(i+1) \ldots n12 \ldots (i-1)$, where $w_{K_n} = 123 \ldots n$ is a word-representant of the graph $K_n$. Define a function $h^i(w_G) = g^{\pi(w_{K_n},i)}(w_G)$. Let,
    \begin{equation*}
        w_{G \: \square \: K_n} = h^2(\pi(w_G))J^2(\pi(w_G))h^3(\pi(w_G))\ldots J^{i-1}(\pi(w_G))h^i(\pi(w_G))\ldots J^n(\pi(w_G))h^1(w_G)
    \end{equation*}
    We claim that the word, $w_{G \: \square \: K_n}$, represents the graph $G \: \square \: K_n$. By the definition of the Cartesian product of graphs, two vertices $u_j^i$ and $u_k^l$ alternate in $w_{G \: \square \: K_n}$ if and only if either of the following cases holds. \begin{enumerate}
    \item[(i)] $j=k$ and $i$, $l$ alternate in $w_{K_n}$. \\
    \item[(ii)] $i=l$ and $u_j$, $u_k$ alternate in $w_G$.
    \end{enumerate}
   \textbf{Case (i): }
    Suppose $j = k$, and without loss of generality assume $1 < i < l < n$. Then, $$w_{G \: \square \: K_n}|_{\{u_j^i,u_j^l\}} = [(\displaystyle \prod_{r=2}^{r=i}h^r(\pi(w_G)))J^i(\pi(w_G))( \displaystyle \prod_{r=i+1}^{r=l}h^r(\pi(w_G)))J^l(\pi(w_G))(\displaystyle \prod_{r=l+1}^{r=n}h^r(\pi(w_G)))h^1(w_G)]|_{\{u_j^i,u_j^l\}}.$$
    Here,
    $$h^r(\pi(w_G))|_{\{u_j^i,u_j^l\}} = \begin{cases} u_j^iu_j^l& \text{if }r \le i,\\ u_j^lu_j^i& \text{if }i < r \le l,\\ u_j^iu_j^l& \text{if }r > l.\end{cases}$$ Moreover, $h^1(w_G)|_{\{u_j^i,u_j^l\}} = u_j^iu_j^l\ldots$.
    Hence, $w_{G \: \square \: K_n}|_{\{u_j^i, u_j^l\}} = (\displaystyle \prod_{r=2}^{r=i}u_j^iu_j^l)u_j^i(\displaystyle \prod_{r=i+1}^{r=l}u_j^lu_j^i)u_j^l(\displaystyle \prod_{r=l+1}^{r=n}u_j^iu_j^l)u_j^iu_j^l\ldots$.
    As a result, $u_j^i$ and $u_j^l$ alternate in $w_{G \: \square \: K_n}$ as $i$ and $l$ alternate in $w_{K_n}$.
    
    Suppose $j = k$ and $i = 1$ and $ l < n$. Then, $w_{G \: \square \: K_n}|_{\{u_j^1, u_j^l\}} = (\displaystyle \prod_{r=2}^{r=l}u_j^lu_j^1)u_j^l(\displaystyle \prod_{r=l+1}^{r=n}u_j^1u_j^l)u_j^1u_j^l\ldots.$
    Hence, $u_j^1$ and $u_j^l$ alternate in $w_{G \: \square \: K_n}$.
    
    Suppose $j = k$ and $i > 1$ and $ l = n$. Then, $w_{G \: \square \: K_n}|_{\{u_j^i, u_j^n\}} = (\displaystyle \prod_{r=2}^{r=i}u_j^iu_j^n)u_j^i(\displaystyle \prod_{r=i+1}^{r=n}u_j^nu_j^i)u_j^nu_j^iu_j^n\ldots.$
    Hence, $u_j^i$ and $u_j^n$ alternate in $w_{G \: \square \: K_n}$.
    
    Suppose $j = k$, $i = 1$ and $ l = n$. Then, $w_{G \: \square \: K_n}|_{\{u_j^1, u_j^n\}} = (\displaystyle \prod_{r=2}^{r=n}u_j^nu_j^1)u_j^nu_j^1u_j^n\ldots.$
    Hence, $u_j^1$ and $u_j^n$ alternate in $w_{G \: \square \: K_n}$.
    
    \noindent\textbf{Case (ii): }
    Suppose $i=l$, and  without loss of generality assume $j < k$. Then, $$w_{G \: \square \: K_n}|_{\{u_j^i,u_k^i\}} = [(\displaystyle \prod_{r=2}^{r=i}h^r(\pi(w_G)))J^i(\pi(w_G))(\displaystyle \prod_{r=i+1}^{r=n}h^r(\pi(w_G)))h^1(w_G)]|_{\{u_j^i,u_k^i\}}.$$
    For all $r$, $h^r(\pi(w_G))|_{\{u_j^i,u_k^i\}} = u_j^iu_k^i$. Hence, $w_{G \: \square \: K_n}|_{\{u_j^i, u_k^i\}} = (\displaystyle \prod_{r=2}^{r=i}u_j^iu_k^i)u_j^iu_k^i(\displaystyle \prod_{r=i+1}^{r=n}u_j^iu_k^i)g^i(w_G|_{\{u_j,u_k\}})$.
    Hence, $u_j^i$ and $u_k^i$ alternate in $w_{G \: \square \: K_n}$ if and only if they alternate in $w_G$.
    
    \noindent\textbf{Case (iii): } In this case, we check whether $u_j^i$, $u_k^l$ and $u_j^l$, $u_k^i$ do not alternate in $w_{G \: \square \: K_n}$ when $i \not= l$ and $j \not= k$.

    Suppose $i < l$, and  without loss of generality assume $j < k$. First, we consider $$w_{G \: \square \: K_n}|_{\{u_j^i,u_k^l\}} = [(\displaystyle \prod_{r=2}^{r=i}h^r(\pi(w_G)))J^i(\pi(w_G))(\displaystyle \prod_{r=i+1}^{r=l}h^r(\pi(w_G)))J^l(\pi(w_G))(\displaystyle \prod_{r=l+1}^{r=n}h^r(\pi(w_G)))h^1(w_G)]|_{\{u_j^i,u_k^l\}}.$$
    For all $r$, $h^r(\pi(w_G))|_{\{u_j^i,u_k^l\}} = u_j^iu_k^l.$ Hence, for all $1 < i < l < n$, $w_{G \: \square \: K_n}|_{\{u_j^i, u_k^l\}} = (\displaystyle \prod_{r=2}^{r=i}u_j^iu_k^l){u_j^i}(\displaystyle \prod_{r=i+1}^{r=l}u_j^iu_k^l)u_k^l\\(\displaystyle \prod_{r=l+1}^{r=n}u_j^iu_k^l)u_j^iu_k^l\ldots$
    Hence, $u_j^i$ and $u_k^l$ do not alternate in $w_{G \: \square \: K_n}$.
    
    Suppose $i = 1$ and $ l < n$. Then, $w_{G \: \square \: K_n}|_{\{u_j^1, u_k^l\}} = (\displaystyle \prod_{r=2}^{r=l}u_j^1u_k^l)u_k^l(\displaystyle \prod_{r=l+1}^{r=n}u_j^1u_k^l)u_j^1u_k^l\ldots.$
    Hence, $u_j^1$ and $u_k^l$ do not alternate in $w_{G \: \square \: K_n}$.
    
    Suppose $i > 1$ and $l = n$. Then, $w_{G \: \square \: K_n}|_{\{u_j^i, u_k^n\}} = (\displaystyle \prod_{r=2}^{r=i}u_j^iu_k^n)u_j^i(\displaystyle \prod_{r=i+1}^{r=n}u_j^iu_k^n)u_k^nu_j^iu_k^n\ldots.$
    Hence, $u_j^i$ and $u_k^n$ do not alternate in $w_{G \: \square \: K_n}$.
    
    Suppose $i = 1$ and $l = n$. Then, $w_{G \: \square \: K_n}|_{\{u_j^1, u_k^n\}} = (\displaystyle \prod_{r=2}^{r=n}u_j^1u_k^n)u_k^nu_j^1u_k^n\ldots.$
    Hence, $u_j^1$ and $u_k^n$ do not alternate in $w_{G \: \square \: K_n}$. 
    
    Now Consider, $w_{G \: \square \: K_n}|_{\{u_j^l,u_k^i\}} = [(\displaystyle \prod_{r=2}^{r=i}h^r(\pi(w_G)))J^i(\pi(w_G))(\displaystyle \prod_{r=i+1}^{r=l}h^r(\pi(w_G)))J^l(\pi(w_G))(\displaystyle \prod_{r=l+1}^{r=n}h^r(\pi(w_G)))\\h^1(w_G)]|_{\{u_j^l,u_k^i\}}.$
    For all $r$, $h^r(\pi(w_G))|_{\{u_j^i,u_k^i\}} = u_j^lu_k^i.$ Hence for all $1 < i < l < n$, $w_{G \: \square \: K_n}|_{\{u_j^l, u_k^i\}} = (\displaystyle \prod_{r=2}^{r=i}u_j^lu_k^i){u_k^i}\\(\displaystyle \prod_{r=i+1}^{r=l}u_j^lu_k^i)u_j^l(\displaystyle \prod_{r=l+1}^{r=n}u_j^lu_k^i)u_j^lu_k^i\ldots$
    Hence, $u_j^l$ and $u_k^i$ do not alternate in $w_{G \: \square \: K_n}$.
    
    Suppose $i = 1$ and $ l < n$. Then, $w_{G \: \square \: K_n}|_{\{u_j^l, u_k^1\}} = (\displaystyle \prod_{r=2}^{r=l}u_j^lu_k^1)u_j^l(\displaystyle \prod_{r=l+1}^{r=n}u_j^lu_k^1)u_j^lu_k^1\ldots.$
    Hence, $u_j^l$ and $u_k^1$ do not alternate in $w_{G \: \square \: K_n}$.
    
    Suppose $i > 1$ and $l = n$. Then, $w_{G \: \square \: K_n}|_{\{u_j^n, u_k^i\}} = (\displaystyle \prod_{r=2}^{r=i}u_j^nu_k^i)u_k^i(\displaystyle \prod_{r=i+1}^{r=n}u_j^nu_k^i)u_j^nu_j^nu_k^i\ldots.$
    Hence, $u_j^n$ and $u_k^i$ do not alternate in $w_{G \: \square \: K_n}$.
    
    Suppose $i = 1$ and $l = n$. Then, $w_{G \: \square \: K_n}|_{\{u_j^n, u_k^1\}} = (\displaystyle \prod_{r=2}^{r=n}u_j^nu_k^1)u_j^nu_j^nu_k^1\ldots.$
    Hence, $u_j^n$ and $u_k^1$ do not alternate in $w_{G \: \square \: K_n}$.

    Therefore, $w_{G \: \square \: K_n}$ represents the graph $G \: \square \: K_n$. Hence, $l(G \: \square \: K_n) \le |w_{G \: \square \: K_n}|$. As $|h^r(w_G)| = n|w_G|$ for all $r$,
    \begin{eqnarray*}
        l(G \: \square \: K_n) & \le & (n-1)|h^r(\pi(w_G))| + (n-1)l(\pi(w_G)) + |h^1(w_G)| \\ & = & (n-1)nm + (n-1)m + nl(G) \\ 
        & = & nl(G) + (n^2 - 1)m.
    \end{eqnarray*}
\end{proof}

\begin{theorem}\label{GH}
    Let $G$ and $H$ be two word-representable graphs such that $|G| \ge |H|$, with minimum length of their word-representants $l(G)$ and $l(H)$, respectively. Then minimum length of the word-representants of the graph $G \: \square \: H$,
    \begin{equation*}
        l(G \: \square \: H) \le |H|l(G) + |G|l(H) + (|H|^2 - 1)|G|.
    \end{equation*}
\end{theorem}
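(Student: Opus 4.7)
The plan is to construct an explicit word $w_{G\:\square\:H}$ whose length is at most $|H|l(G) + |G|l(H) + (|H|^2-1)|G|$, generalizing the construction of Theorem \ref{GKn} from $H = K_n$ to an arbitrary word-representable graph $H$. The extra $|G|l(H)$ term (compared to the $K_n$ case) will come from appending one copy of $w_H$ at each vertex of $G$ in order to realize the non-complete structure of $H$ on each ``$u_j$-slice.''

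Let $w_G, w_H$ be minimum-length word-representants with $\pi(w_G) = u_1 u_2 \ldots u_m$ and $\pi(w_H) = v_1 v_2 \ldots v_n$, so $m = |G|$, $n = |H|$, and $m \ge n$. For each $v_i \in V(H)$, I would introduce a word $\rho_i$ over $V(H)$ that begins with $v_i$ (playing the role of $\pi(w_{K_n}, i)$ from Theorem \ref{GKn}) and define $h^{v_i}(w) = g^{\rho_i}(w)$. I would then propose the candidate word
\begin{equation*}
w_{G\:\square\:H} = \Bigl(\prod_{i=2}^{n} h^{v_i}(\pi(w_G))\, J^{v_i}(\pi(w_G))\Bigr)\, h^{v_1}(w_G)\, \Bigl(\prod_{j=1}^{m} X_{u_j}\Bigr),
\end{equation*}
where $X_{u_j}$ denotes the word obtained from $w_H$ by relabelling each letter $v$ as $u_j^v$. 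The first parenthesised product contributes $(n-1)nm + (n-1)m = (n^2-1)m$, the middle factor contributes $nl(G)$, and the final product contributes $ml(H)$, summing to the desired bound.

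Verification proceeds by case analysis on whether $u_j^v$ and $u_k^{v'}$ share first or second coordinate, mirroring the three cases of Theorem \ref{GKn}. For same $u_j$ and different $v,v'$: the restriction is the $K_n$-style prefix restriction concatenated with $X_{u_j}|_{\{u_j^v,u_j^{v'}\}} = w_H|_{\{v,v'\}}$, and I must show this alternates iff $vv' \in E(H)$. For same $v_i$ and different $u_j, u_k$: the prefix already represents $G$ on the $v_i$-slice by the Theorem \ref{GKn} argument, and the $X_{u_j}$'s append $O_{w_H}(v_i)$ copies of $u_j^{v_i}$; I would use Observation \ref{pw} and Lemmas \ref{xu}, \ref{vx} to argue these additions preserve the $G$-alternation pattern. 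For different-both: non-alternation follows from the corresponding case in Theorem \ref{GKn} together with the fact that $X_{u_j}$ contains no letter of first coordinate other than $u_j$.

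The main obstacle is the first case: since the prefix alone imposes the $K_n$-like property that every pair $\{u_j^v, u_j^{v'}\}$ alternates, the appended $w_H|_{\{v,v'\}}$ must simultaneously \emph{preserve} alternation for edges and \emph{break} it for non-edges of $H$. This forces a careful parity analysis of the last letter of the prefix restriction at each such pair, which in turn dictates the choice of the $\rho_i$'s and the ordering of the $X_{u_j}$'s. A secondary technical point is verifying that the consecutive blocks $(u_j^{v_i})^{O_{w_H}(v_i)}$ contributed by the $X_{u_j}$'s to the $v_i$-slice do not destroy the $G$-edge alternations established by the prefix; once these parity and compatibility checks go through, the length bound is immediate.
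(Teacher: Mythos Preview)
Your prefix $\prod_{i=2}^{n} h^{v_i}(\pi(w_G))\, J^{v_i}(\pi(w_G))\cdot h^{v_1}(w_G)$ (with $\rho_i$ the cyclic shifts $\pi(w_H,i)$) is exactly the one the paper uses, and your Case~(i) and Case~(iii) analyses would go through just as in the paper. The problem is the final block. Writing $\prod_{j=1}^{m} X_{u_j}$, one contiguous copy of $w_H$ per vertex of $G$, is the wrong interleaving: restricting this block to $\{u_j^{v_i},u_k^{v_i}\}$ (with $j<k$) yields
\[
\bigl(\textstyle\prod_{j'=1}^{m}X_{u_{j'}}\bigr)\big|_{\{u_j^{v_i},u_k^{v_i}\}}
= (u_j^{v_i})^{O_{w_H}(v_i)}\,(u_k^{v_i})^{O_{w_H}(v_i)},
\]
because $X_{u_{j'}}$ contributes nothing unless $j'\in\{j,k\}$, and then contributes $O_{w_H}(v_i)$ copies of a single letter. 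As soon as $O_{w_H}(v_i)\ge 2$ --- which happens for every $v_i$ outside a maximum clique of $H$, so certainly for some $v_i$ whenever $H\ne K_n$ --- this block contains two consecutive $u_j^{v_i}$'s, and hence $u_j^{v_i},u_k^{v_i}$ fail to alternate in $w_{G\,\square\,H}$ even when $u_ju_k\in E(G)$. This is not a parity issue and not something Observation~\ref{pw} or Lemmas~\ref{xu},~\ref{vx} can repair: those results let you delete a redundant \emph{extremal} occurrence from a word that already represents the right graph; they cannot remove a double letter sitting in the interior of a word that does not.

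The paper's fix is to interleave the other way. Instead of one block per $G$-vertex, it appends $J^{w_H}(\sigma(w_G))$: one block per \emph{letter of $w_H$}, each block running through $\sigma(w_G)$. This has the same length $m\,l(H)$, but now the restriction to $\{u_j^{v_i},u_k^{v_i}\}$ is $\bigl(\sigma(w_G)|_{\{u_j,u_k\}}\bigr)^{O_{w_H}(v_i)}$ with superscript $v_i$, which \emph{does} alternate and splices correctly onto the prefix (whose last two letters on this slice come from $w_G|_{\{u_j,u_k\}}$). Simultaneously, on the $\{u_j^{v_i},u_j^{v_l}\}$ slice this block restricts to $J^{w_H|_{\{v_i,v_l\}}}(u_j)$, the same thing your $X_{u_j}$ would have given, so Case~(i) is unaffected. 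In short: your outline is correct up to the last block, but that block must be transposed --- iterate over positions of $w_H$ outermost and over $\sigma(w_G)$ innermost --- or the construction fails on every $G$-edge in any non-clique $H$-layer.
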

\begin{proof}
    Let $w_G$ be a word-representant of the graph $G$. Let $w_H$ be a word-representant of the graph $H$. Let $V(G) = 
    \{u_1, u_2, \ldots, u_m\}$, $V(H) = \{v_1,v_2, \ldots ,v_n\}$ and $u_j^{v_i} \in V(G) \times V(H)$. Let $\pi(w_G) = u_1u_2 \ldots u_m$ be the initial permutation and $\sigma(w_G) = t_1t_2\ldots t_m$ be the final permutation of the word $w_G$. Let $\pi(w_H) = v_1v_2 \ldots v_n$ be the initial permutation of the word $w_H$. Let $ \pi(w_H,i) = v_iv_{i+1} \ldots v_nv_1 \ldots v_{i-1}$. Let us define a function $h^i(w_G) = g^{\pi(w_H,i)}(w_G)$. Let,
    \begin{equation*}
        w_{G \: \square \: H} = h^2(\pi(w_G))\ldots J^{v_i}(\pi(w_G))h^{i+1}(\pi(w_G)) \ldots J^{v_n}(\pi(w_G))h^1(w_G)J^{w_H}(\sigma(w_G)).
    \end{equation*}
    We claim that the word, $w_{G \: \square \: H}$, represents the graph $G \: \square \: H$. By definition of the Cartesian product of graphs, two vertices $u_j^{v_i}$ and $u_k^{v_l}$ alternate in $w_{G \: \square \: H}$ if and only if either of the following cases holds. \begin{enumerate}
    \item[(i)] $j=k$ and $v_i$, $v_l$ alternate in $w_H$.\\
    \item[(ii)] $i=l$ and $u_j$, $u_k$ alternate in $w_G$.
    \end{enumerate} 
    \noindent\textbf{Case (i): }
    Suppose $j = k$ and consider $1<i<l<n$, without loss of generality. Then, $ w_{G \: \square \: H}|_{\{u_j^{v_i}, u_j^{v_l}\}}$ is,$$ [(\displaystyle \prod_{r=2}^{r=i}h^r(\pi(w_G)))J^{v_i}(\pi(w_G))(\displaystyle \prod_{r=i+1}^{r=l}h^r(\pi(w_G)))J^{v_l}(\pi(w_G))(\displaystyle \prod_{r=l+1}^{r=n}h^r(\pi(w_G)))h^1(w_G)J^{w_H}(\sigma(w_G))]|_{\{u_j^{v_i},u_j^{v_l}\}}. $$
    Here,
    $$h^r(\pi(w_G))|_{\{u_j^{v_i},u_j^{v_l}\}} = \begin{cases} u_j^{v_i}u_j^{v_l}& \text{if }r \le i,\\ u_j^{v_l}u_j^{v_i}& \text{if }i < r \le l,\\ u_j^{v_i}u_j^{v_l}& \text{if }r > l.\end{cases}$$ Further, $h^1(w_G)|_{\{u_j^{v_i},u_j^{v_l}\}} = u_j^{v_i}u_j^{v_l}\ldots$ and $J^{w_H}(\sigma(w_G))|_{\{u_j^{v_i}, u_j^{v_l}\}} = J^{w_H|_{\{v_i,v_l\}}}(u_j).$ Hence,
    $$w_{G \: \square \: H}|_{\{u_j^{v_i}, u_j^{v_l}\}} = (\displaystyle \prod_{r=2}^{r=i}u_j^{v_i}u_j^{v_l})u_j^{v_i}(\displaystyle \prod_{r=i+1}^{r=l}u_j^{v_l}u_j^{v_i})u_j^{v_l}(\displaystyle \prod_{r=l+1}^{r=n}u_j^{v_i}u_j^{v_l})(u_j^{v_i}u_j^{v_l}\ldots)J^{w_H|_{\{v_i,v_l\}}}(u_j).$$
    Hence, $u_j^{v_i}$ and $u_j^{v_l}$ alternate in $w_{G \: \square \: H}|_{\{u_j^{v_i}, u_j^{v_l}\}}$ if and only if $v_i$ and $v_l$ alternate in $w_H$.
    
    Suppose $j = k$ and $i = 1$ and $ l < n$. Then, $w_{G \: \square \: H}|_{\{u_j^{v_1}, u_j^{v_l}\}} = (\displaystyle \prod_{r=2}^{r=l}u_j^{v_l}u_j^{v_1})u_j^{v_l}(\displaystyle \prod_{r=l+1}^{r=n}u_j^{v_1}u_j^{v_l})(u_j^{v_1}u_j^{v_l}\ldots)\\J^{w_H|_{\{v_1,v_l\}}}(u_j).$
    Hence, $u_j^{v_1}$ and $u_j^{v_l}$ alternate in  $w_{G \: \square \: H}|_{\{u_j^{v_1}, u_j^{v_l}\}}$ if and only if $v_1$ and $v_l$ alternate in $w_H$.
    
    Suppose $j = k$ and $i > 1$ and $ l = n$. Then, $w_{G \: \square \: H}|_{\{u_j^{v_i}, u_j^{v_n}\}} = (\displaystyle \prod_{r=2}^{r=i}u_j^{v_i}u_j^{v_n})u_j^{v_i}(\displaystyle \prod_{r=i+1}^{r=n}u_j^{v_n}u_j^{v_i})u_j^{v_n}(u_j^{v_i}u_j^{v_n}\ldots)\\J^{w_H|_{\{v_i,v_n\}}}(u_j).$
    Hence, $u_j^{v_i}$ and $u_j^{v_n}$ alternate in $w_{G \: \square \: H}|_{\{u_j^{v_i}, u_j^{v_n}\}}$ if and only if $v_i$ and $v_n$ alternate in $w_H$.
    
    Suppose $j = k$ and $i = 1$ and $ l = n$. Then, $w_{G \: \square \: K_n}|_{\{u_j^{v_1}, u_j^{v_n}\}} = (\displaystyle \prod_{r=2}^{r=n}u_j^{v_n}u_j^{v_1})u_j^{v_n}(u_j^{v_1}u_j^{v_n}\ldots)\\J^{w_H|_{\{v_1,v_n\}}}(u_j).$
    Hence, $u_j^{v_1}$ and $u_j^{v_n}$ alternate in $w_{G \: \square \: H}|_{\{u_j^{v_1}, u_j^{v_n}\}}$ if and only if $v_1$ and $v_n$ alternate in $w_H$.
    
    \noindent\textbf{Case (ii): }
    Suppose $i=l$ and consider $j < k$, without loss of generality. Then, $ w_{G \: \square \: H}|_{\{u_j^{v_i}, u_k^{v_i}\}}$ is,$$ [(\displaystyle \prod_{r=2}^{r=i}h^r(\pi(w_G)))J^{v_i}(\pi(w_G))(\displaystyle \prod_{r=i+1}^{r=n}h^r(\pi(w_G)))h^1(w_G)J^{w_H}(\sigma(w_G))]|_{\{u_j^{v_i},u_k^{v_i}\}}. $$

    For all $r$, $h^r(\pi(w_G))|_{\{u_j^i,u_k^i\}} = u_j^{v_i}u_k^{v_i}.$ Further, $h^1(w_G)|_{\{u_j^{v_i},u_k^{v_i}\}} = g^{v_i}(w_G|_{\{u_j,u_k\}})$ and $J^{w_H}(\sigma(w_G))|_{\{u_j^{v_i},u_k^{v_i}\}} \\= J^{v_i}(\sigma(w_G)|_{\{u_j,u_k\}}).$ Then, by \textit{Proposition }\ref{gj}, $g^{v_i}(w_G|_{\{u_j,u_k\}})J^{v_i}(\sigma(w_G)|_{\{u_j,u_k\}}) = J^{v_i}((w_G\sigma(w_G))|_{\{u_j,u_k\}}).$ Hence,
    $$w_{G \: \square \: H}|_{\{u_j^{v_i}, u_k^{v_i}\}} = (\displaystyle \prod_{r=2}^{r=i}u_j^{v_i}u_k^{v_i})u_j^{v_i}u_k^{v_i}(\displaystyle \prod_{r=i+1}^{r=n}u_j^{v_i}u_k^{v_i})J^{v_i}((w_G\sigma(w_G)|_{\{u_j,u_k\}}).$$
    Hence, $u_j^{v_i}$ and $u_k^{v_i}$ alternate in $w_{G \: \square \: H}|_{\{u_j^{v_i}, u_k^{v_i}\}}$ if and only if $u_j$ and $u_k$ alternate in $w_G$. 
    
    \noindent\textbf{Case (iii): }
    In this case we check whether $u_j^{v_i}$, $u_k^{v_l}$ and $u_j^{v_l}$, $u_k^{v_i}$ do not alternate in $w_{G \: \square \: H}$ when $i \not= l$ and $j \not= k$.

    Suppose $i < l$ and $j < k$ without loss of generality. Then firstly we consider $w_{G \: \square \: H}|_{\{u_j^{v_i},u_k^{v_l}\}}$, which is, $$ [(\displaystyle \prod_{r=2}^{r=i}h^r(\pi(w_G)))J^{v_i}(\pi(w_G))(\displaystyle \prod_{r=i+1}^{r=l}h^r(\pi(w_G)))J^{v_l}(\pi(w_G))(\displaystyle \prod_{r=l+1}^{r=n}h^r(\pi(w_G)))h^1(w_G)J^{w_H}(\sigma(w_G))]|_{\{u_j^{v_i},u_k^{v_l}\}}. $$
    For all $r$, $h^r(\pi(w_G))|_{\{u_j^{v_i},u_k^{v_l}\}} = u_j^{v_i}u_k^{v_l}.$ Hence for all $1 < i < l < n$,
    $$w_{G \: \square \: H}|_{\{u_j^{v_i}, u_k^{v_l}\}} = (\displaystyle \prod_{r=2}^{r=i}u_j^{v_i}u_k^{v_l}){u_j^{v_i}}(\displaystyle \prod_{r=i+1}^{r=l}u_j^{v_i}u_k^{v_l})u_k^{v_l}(\displaystyle \prod_{r=l+1}^{r=n}u_j^{v_i}u_k^{v_l})g^{v_iv_l}(w_G|_{\{u_j,u_k\}})J^{w_H|_{\{v_i,v_l\}}}(\sigma(w_G)|_{\{u_j,u_k\}}).$$
    Hence, $u_j^{v_i}$ and $u_k^{v_l}$ do not alternate in $w_{G \: \square \: H}$.
    
    Suppose $i = 1$ and $ l < n$. Then, $w_{G \: \square \: H}|_{\{u_j^{v_1}, u_k^{v_l}\}} = (\displaystyle \prod_{r=2}^{r=l}u_j^{v_1}u_k^{v_l})u_k^{v_l}(\displaystyle \prod_{r=l+1}^{r=n}u_j^{v_1}u_k^{v_l})g^{v_1v_l}(w_G|_{\{u_j,u_k\}})\\J^{w_H|_{\{v_1,v_l\}}}(\sigma(w_G)|_{\{u_j,u_k\}}).$
    Hence, $u_j^{v_1}$ and $u_k^{v_l}$ do not alternate in $w_{G \: \square \: H}$.
    
    Suppose $i > 1$ and $l = n$. Then, $w_{G \: \square \: H}|_{\{u_j^{v_i}, u_k^{v_n}\}} = (\displaystyle \prod_{r=2}^{r=i}u_j^{v_i}u_k^{v_n}){u_j^{v_i}}(\displaystyle \prod_{r=i+1}^{r=n}u_j^{v_i}u_k^{v_n})u_k^{v_n}g^{v_iv_n}(w_G|_{\{u_j,u_k\}})\\J^{w_H|_{\{v_i,v_n\}}}(\sigma(w_G)|_{\{u_j,u_k\}}).$
    Hence, $u_j^{v_i}$ and $u_k^{v_n}$ do not alternate in $w_{G \: \square \: H}$.
    
    Suppose $i = 1$ and $l = n$. Then, $w_{G \: \square \: H}|_{\{u_j^{v_1}, u_k^{v_n}\}} = (\displaystyle \prod_{r=2}^{r=n}u_j^{v_1}u_k^{v_n})u_k^{v_n}g^{v_1v_n}(w_G|_{\{u_j,u_k\}})J^{w_H|_{\{v_1,v_n\}}}(\sigma(w_G)|_{\{u_j,u_k\}}).$
    Hence, $u_j^{v_1}$ and $u_k^{v_n}$ do not alternate in $w_{G \: \square \: H}$.
    
    Now let us consider $w_{G \: \square \: H}|_{\{u_j^{v_l},u_k^{v_i}\}}$, which is, $$ [(\displaystyle \prod_{r=2}^{r=i}h^r(\pi(w_G)))J^{v_i}(\pi(w_G))(\displaystyle \prod_{r=i+1}^{r=l}h^r(\pi(w_G)))J^{v_l}(\pi(w_G))(\displaystyle \prod_{r=l+1}^{r=n}h^r(\pi(w_G)))h^1(w_G)J^{w_H}(\sigma(w_G))]|_{\{u_j^{v_l},u_k^{v_i}\}}. $$
    For all $r$, $h^r(\pi(w_G))|_{\{u_j^{v_l},u_k^{v_i}\}} = u_j^{v_l}u_k^{v_i}.$ Hence, for all $1 < i < l < n$,
    $$w_{G \: \square \: H}|_{\{u_j^{v_l}, u_k^{v_i}\}} = (\displaystyle \prod_{r=2}^{r=i}u_j^{v_l}u_k^{v_i}){u_k^{v_i}}(\displaystyle \prod_{r=i+1}^{r=l}u_j^{v_l}u_k^{v_i})u_j^{v_l}(\displaystyle \prod_{r=l+1}^{r=n}u_j^{v_l}u_k^{v_i})g^{v_iv_l}(w_G|_{\{u_j,u_k\}})J^{w_H|_{\{v_i,v_l\}}}(\sigma(w_G)|_{\{u_j,u_k\}}).$$
    Hence, $u_j^{v_l}$ and $u_k^{v_i}$ do not alternate in $w_{G \: \square \: H}$
    
    Suppose $i = 1$ and $ l < n$. Then, $w_{G \: \square \: H}|_{\{u_j^{v_l}, u_k^{v_1}\}} = (\displaystyle \prod_{r=2}^{r=l}u_j^{v_l}u_k^{v_1})u_j^{v_l}(\displaystyle \prod_{r=l+1}^{r=n}u_j^{v_l}u_k^{v_1})g^{v_1v_l}(w_G|_{\{u_j,u_k\}})\\J^{w_H|_{\{v_1,v_l\}}}(\sigma(w_G)|_{\{u_j,u_k\}}).$
    Hence, $u_j^{v_l}$ and $u_k^{v_1}$ do not alternate in $w_{G \: \square \: H}$.
    
    Suppose $i > 1$ and $l = n$. Then, $w_{G \: \square \: H}|_{\{u_j^{v_n}, u_k^{v_i}\}} = (\displaystyle \prod_{r=2}^{r=i}u_j^{v_l}u_k^{v_i}){u_k^{v_i}}(\displaystyle \prod_{r=i+1}^{r=n}u_j^{v_n}u_k^{v_i})u_j^{v_n}g^{v_iv_n}(w_G|_{\{u_j,u_k\}})\\J^{w_H|_{\{v_i,v_n\}}}(\sigma(w_G)|_{\{u_j,u_k\}}).$
    Hence, $u_j^{v_n}$ and $u_k^{v_i}$ do not alternate in $w_{G \: \square \: H}$.
    
    Suppose $i = 1$ and $l = n$. Then, $w_{G \: \square \: H}|_{\{u_j^{v_n}, u_k^{v_1}\}} = (\displaystyle \prod_{r=2}^{r=n}u_j^{v_n}u_k^{v_1})u_j^{v_n}g^{v_1v_n}(w_G|_{\{u_j,u_k\}})J^{w_H|_{\{v_1,v_n\}}}(\sigma(w_G)|_{\{u_j,u_k\}}).$
    Hence, $u_j^{v_n}$ and $u_k^{v_1}$ do not alternate in $w_{G \: \square \: H}$ since $u_j^{v_n}$ occurs to the left of $u_k^{v_l}$ in $g^{v_1v_n}(w_G|_{\{u_j,u_k\}})$.

    Therefore, $w_{G \: \square \: H}$ represents the graph $G \: \square \: H$. Hence, $l(G \: \square \: H) \le |w_{G \: \square \: H}|.$ As $|h^r(w_G)| = nl(G)$ for all $r$ and $|J^{w_H}(w_G)| = l(H)l(G)$,
    \begin{eqnarray*}
        l(G \: \square \: H) & \le & (n-1)|h^r(\pi(w_G))| + \sum_{i = 2}^n|J^{v_i}(\pi(w_G))| + |h^1(w_G)| + |J^{w_H}(\sigma(w_G))| \\ & = & (n-1)nm + (n-1)m + nl(G) + l(H)m \\ & = & nl(G) + ml(H) + (n^2 -1)m.
    \end{eqnarray*}
\end{proof}
\begin{remark}
	If we calculate an upper bound for $G \: \square \: K_n$ using \textit{Theorem} \ref{GH}, then $l(G \: \square \: K_n) \le nl(G) + mn + (n^2 - 1)m$. But using \textit{Theorem }\ref{GKn}, we get $l(G \: \square \: K_n) \le nl(G) + (n^2-1)m$, which is much more tighter bound than the former one. Hence, we stated three different theorems for $G \: \square \: K_2$, $G \: \square \: K_n$ and $G \: \square \: H$.
\end{remark}
\begin{corollary}
    Let $G$ and $H$ be two word-representable graphs with representation number $k_1$ and $k_2$ respectively. Then, $G \: \square \: H$ is $(k_1 + k_2 + min \{|G|, |H|\})$-representable.
\end{corollary}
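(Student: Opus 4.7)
The plan is to reuse the morphism-based construction from Theorem \ref{GH} almost verbatim and then repair its only source of non-uniformity. Take $w_G$ to be a $k_1$-uniform word-representant of $G$ and $w_H$ a $k_2$-uniform word-representant of $H$, and without loss of generality assume $|G|\ge|H|$, writing $n=|H|$. Feeding these uniform words into the construction of Theorem \ref{GH} produces a word $w_{G \: \square \: H}$ representing $G \: \square \: H$. Counting block by block, the letter $u_j^{v_i}$ gains one occurrence from each of the $n-1$ blocks $h^r(\pi(w_G))$ with $r=2,\dots,n$, one more from $J^{v_r}(\pi(w_G))$ precisely when $r=i$ and $i\ge 2$, $k_1$ occurrences from $h^1(w_G)$, and $k_2$ occurrences from $J^{w_H}(\sigma(w_G))$. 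This totals $n+k_1+k_2$ for every $i\ge 2$ and only $n-1+k_1+k_2$ for $i=1$, so the word is uniform except on the letters with superscript $v_1$, each of which is short by exactly one.

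To restore uniformity I would prepend a single copy of every $v_1$-letter in the correct order, setting
\begin{equation*}
	w^{*} \;=\; J^{v_1}(\pi(w_G)) \cdot w_{G \: \square \: H}.
\end{equation*}
A direct count now gives $n+k_1+k_2 = k_1+k_2+\min\{|G|,|H|\}$ occurrences of every letter, so $w^{*}$ is uniform with the claimed multiplicity. It remains to show that $w^{*}$ still represents $G \: \square \: H$.

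The verification reduces to checking how the new prefix acts on the restriction of the word to each pair of letters, paralleling Cases (i)--(iii) in the proof of Theorem \ref{GH}. If neither letter has superscript $v_1$ the prefix contributes nothing and the original argument is unchanged. For a pair $\{u_j^{v_1}, u_k^{v_1}\}$ with $j<k$ the prefix restricts to $u_j^{v_1} u_k^{v_1}$, which coincides with $\pi(w_{G \: \square \: H})|_{\{u_j^{v_1}, u_k^{v_1}\}}$ because the $u$-blocks of $h^2(\pi(w_G))$ place each $u_j^{v_1}$ at their end and appear in order of $j$; as in Observation \ref{pw}, prepending an initial-permutation-consistent prefix preserves both alternation and non-alternation. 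For a mixed pair $\{u_j^{v_1}, u_k^{v_l}\}$ with $l\ne 1$ the prefix restricts to the single letter $u_j^{v_1}$: when $j=k$ the block $h^2(\pi(w_G))$ places $u_j^{v_l}$ strictly before $u_j^{v_1}$, so the restriction of $w_{G \: \square \: H}$ starts with $u_j^{v_l}$ and the prepending merely lengthens the alternating pattern observed in Case (i), preserving the characterization of adjacency by alternation in $w_H$; when $j\ne k$ the vertices are non-adjacent in $G \: \square \: H$ and the internal non-alternation already produced in Case (iii) is unaffected by inserting one more letter at the front. The main obstacle is this final verification: uniformity drops out of a line-long count, but ruling out that the inserted prefix accidentally creates a forbidden alternation or destroys a required one for the mixed pairs demands reusing the detailed block-level case analysis that underlies Theorem \ref{GH}.
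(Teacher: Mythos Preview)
Your approach is essentially the paper's---feed uniform representants into the Theorem~\ref{GH} construction and count occurrences---but you carry it one step further. The paper stops after bounding each $O_{w_{G\square H}}(u_j^{v_i})$ by $k_1+k_2+n$, writing the $J^{v_i}(\pi(w_G))$ contribution as $1$ ``for all $i$'' even though no such block is present when $i=1$; it then concludes without exhibiting a uniform word, implicitly relying on the standard (but here unstated) fact that a representant with maximum multiplicity $K$ can be converted into a $K$-uniform one. You instead notice the one-occurrence deficit on the $v_1$-letters explicitly, repair it by prepending $J^{v_1}(\pi(w_G))$, and verify case by case that the prefix preserves the representation. That verification is sound: restricted to any pair of letters, the prefix either contributes nothing, contributes a single letter that extends the existing alternating head (and cannot erase an internal non-alternation in Case~(iii)), or contributes both letters in the same order as $\pi(w_{G\square H})$, so the Observation~\ref{pw} argument applies pairwise. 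Your argument is thus the same route made self-contained, trading the appeal to a background uniformisation lemma for a direct construction.
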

\begin{proof}
    Let $w_G$ and $w_H$ represents the graphs $G$ and $H$, respectively. Without loss of generality, let us assume that $|G| \ge |H|$. Let $V(G) = 
    \{u_1, u_2, \ldots, u_m\}$, $V(H) = \{v_1,v_2, \ldots ,v_n\}$ and $u_j^{v_i} \in V(G) \times V(H)$. Let $\pi(w_G) = u_1u_2 \ldots u_m$ be the initial permutation and $\sigma(w_G) = t_1t_2\ldots t_m$ be the final permutation of the word $w_G$. Let $\pi(w_H) = v_1v_2 \ldots v_n$ be the initial permutation of the word $w_H$. Let $ \pi(w_H,i) = v_iv_{i+1} \ldots v_nv_1 \ldots v_{i-1}$. Let us define a function $h^i(w_G) = g^{\pi(w_H,i)}(w_G)$. Let,
    \begin{equation*}
        w_{G \: \square \: H} = h^2(\pi(w_G))\ldots J^{v_i}(\pi(w_G))h^{i+1}(\pi(w_G)) \ldots J^{v_n}(\pi(w_G))h^1(w_G)J^{w_H}(\sigma(w_G)).
    \end{equation*}
    Then by \textit{Theorem }\ref{GH}, $w_{G \: \square \: H}$ represents the graph $G \: \square \: H$. Here, $O_{h^r(\pi(w_G))}(u_j^{v_i}) = 1$ for all $r$ and $u_j^{v_i}$. Similarly, $O_{J^{v_i}(\pi(w_G))}(u_j^{v_i}) = 1$ for all $i$ and $u_j^{v_i}$. Since the representation number of the graph $G$ is $k_1$, $O_{h^1(w_G)}(u_j^{v_i}) \le k_1$ for any $u_j^{v_i}$ and since the representation number of the graph $H$ is $k_2$, $O_{J^{w_H}(\sigma(w_G))}(u_j^{v_i}) \le k_2$ for any $u_j^{v_i}$. Hence for any $u_j^{v_i}$,
    \begin{eqnarray*}
        O_{w_{G \: \square \: H}}(u_j^{v_i}) & \le & \sum_{r=2}^n O_{h^r(\pi(w_G))}(u_j^{v_i}) + O_{J^{v_i}(\pi(w_G))}(u_j^{v_i}) + O_{h^1(w_G)}(u_j^{v_i}) + O_{J^{w_H}(\sigma(w_G))}(u_j^{v_i}) \\ & \le & n-1 + 1 + k_1 +k_2 \\ & = & k_1 + k_2 + n.
    \end{eqnarray*}
\end{proof}
\section{Rooted Products}

Another graph product that preserves word-representability is Rooted products. In this section, we calculate an upper bound for the minimum length of the word-representants of the Rooted product of a word-representable graph $G$ with $K_2$, $K_n$, and any arbitrary word-representable graph $H$.
\begin{definition}[\cite{kitaev2015words}, \textit{Definition 5.4.11}]
	The \textit{Rooted product} of a graph $G$ and a rooted graph $H$ is the graph $G \circ H$, which is defined as follows: take $|V(G)|$ copies of $H$ and for every vertex $v_i$ of $G$, identify $v_i$ with the root vertex of the $i$th copy of $H$.
\end{definition}
\begin{theorem}[\cite{kitaev2015words}, \textit{Theorem 5.4.13}]
    Let $G$ and $H$ be two word-representable graphs. Then the rooted product $G \circ H$ is also word-representable.
\end{theorem}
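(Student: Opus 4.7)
The plan is to build an explicit word-representant of $G \circ H$ by an occurrence-based substitution on a word of $G$, analogous in spirit to the morphisms $g$ and $J$ used in Section~2. Take $k$-uniform representants $w_G$ of $G$ on $V(G) = \{u_1,\ldots,u_n\}$ and $w_H$ of $H$ on $V(H) = \{r, h_1,\ldots,h_m\}$ with root $r$, where a common $k$ is obtained by raising each word to a suitable power. Write $w_H = r\,Y_1\, r\,Y_2 \cdots r\,Y_k$ with each $Y_t$ free of $r$, and for each copy index $i$ let $Y_t^{(i)}$ be $Y_t$ with the relabeling $h_j \mapsto h_j^{(i)}$. The candidate word I would test is
$$W \;=\; \varphi(w_G), \qquad \varphi\!\bigl(u_i^{(t)}\bigr) \;=\; u_i\, Y_t^{(i)},$$
where $u_i^{(t)}$ denotes the $t$-th occurrence of $u_i$ in $w_G$. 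Intuitively, $\varphi$ expands each letter of $w_G$ into $u_i$ followed by the $t$-th $r$-free block of $w_H$, relabeled for the $i$-th copy.

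The verification would be a restriction analysis with five pair-types. (a) $W|_{\{u_i,u_j\}} = w_G|_{\{u_i,u_j\}}$ because the inserted $Y$-blocks contain no $u$-letters, so $u_i$ and $u_j$ alternate in $W$ iff $u_iu_j \in E(G)$. (b) $W|_{\{u_i, h_l^{(i)}\}}$ reproduces $w_H|_{\{r,h_l\}}$ and (c) $W|_{\{h_l^{(i)}, h_{l'}^{(i)}\}}$ reproduces $w_H|_{\{h_l, h_{l'}\}}$, so all edges inside each $H$-copy are captured. The remaining cases are the cross-copy pairs (d) $\{u_i, h_l^{(j)}\}$ and (e) $\{h_l^{(i)}, h_{l'}^{(j)}\}$ with $i \ne j$, which must be shown to be non-alternating.

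The main obstacle is exactly this last point: a naive substitution can accidentally yield alternation between symbols from distinct copies when $u_i, u_j$ are adjacent in $G$ and the $h$-letter appears uniformly across the $Y_t$'s. The fix I would pursue is either to choose $w_H$ in a ``clustered'' form so that at least one $h_l$ has two consecutive occurrences inside some $Y_t$ (guaranteeing a non-alternating doubleton in the cross-copy restriction), or to append a corrective suffix of the form $\prod_{i=1}^n \pi(w_H^{(i)})$ that stacks $h_l^{(i)}$ against $h_l^{(j)}$ in block order; \textit{Lemmas}~\ref{xu} and~\ref{vx} would then let us trim superfluous letters from the suffix without breaking the representation. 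Once the cross-copy non-alternation is secured, cases (a)--(c) remain intact because the corrective segment contains no $u$-letters and acts uniformly on each $H$-copy.
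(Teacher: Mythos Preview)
This theorem is cited from \cite{kitaev2015words} without proof, but the final theorem of Section~3 constructs an explicit representant of $G\circ H$ and is the natural comparison. Your plan has a real gap at precisely the obstacle you flag, and neither proposed repair closes it. The ``clustered'' fix is impossible for any $h_l$ adjacent to the root: alternation of $r$ and $h_l$ in the $k$-uniform word $w_H=rY_1\cdots rY_k$ forces $h_l$ to occur exactly once in every block $Y_t$, so no representant of $H$ can place two copies of such an $h_l$ inside one block. The suffix fix also fails concretely. Take $u_iu_j\in E(G)$ with $w_G|_{\{u_i,u_j\}}=u_ju_i\cdots u_ju_i$ and $h_l$ a root-neighbour; then $W|_{\{u_i,h_l^{(j)}\}}=h_l^{(j)}u_i\cdots h_l^{(j)}u_i$, and your root-free suffix contributes a single trailing $h_l^{(j)}$, leaving the word alternating. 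Case~(e) fails the same way: with $i<j$, $u_iu_j\in E(G)$, $w_G|_{\{u_i,u_j\}}=u_iu_j\cdots u_iu_j$, and $h_l,h_{l'}$ both root-neighbours, one gets $W|_{\{h_l^{(i)},h_{l'}^{(j)}\}}=h_l^{(i)}h_{l'}^{(j)}\cdots h_l^{(i)}h_{l'}^{(j)}$, and the block-ordered suffix appends $h_l^{(i)}h_{l'}^{(j)}$, again preserving alternation. \textit{Lemmas}~\ref{xu} and~\ref{vx} only delete redundant extremal letters; they cannot manufacture a non-alternation that is not already present.

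The paper's construction avoids the trap by decoupling the two tasks. Its occurrence-based function does not encode $H$ inside the substitution; instead, at one designated occurrence of each $x\in V(G)$ it inserts a \emph{doubled} cyclic run $x^{v_{r+1}}\cdots x^{v_{r-1}}x^{v_r}x^{v_{r+1}}\cdots x^{v_{r-1}}$, so that every non-root letter $x^{v_j}$ acquires two occurrences with only same-copy letters $x^{v_\ast}$ between them, and \textit{Proposition}~\ref{xwx} then kills all cross-copy adjacencies at once. The $H$-edges are supplied afterwards by the separate factor $J^{w_H}(\sigma(w_G))$. The structural problem with your map $u_i^{(t)}\mapsto u_iY_t^{(i)}$ is that it gives each $h_l^{(i)}$ essentially the same occurrence pattern as $u_i$, so adjacencies of $u_i$ in $G$ are inherited by $h_l^{(i)}$; the doubling device is exactly what severs that inheritance.
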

\begin{definition}[\cite{BroereZ19}, \textit{Definition 3}]
	Let $V$ and $V'$ be (possibly different) alphabets, and let $N_k = \{1, \ldots, k\}$.
	The labelling function of a word over $V$ is defined as $H: V^* \rightarrow (V \times N_k)^*$, where the $i$th occurrence of each letter $x$ is mapped to the pair $(x, i)$, and $k$ satisfies the property that every symbol occurs at most $k$ times in $w$. The word $H(w)$ is called the labelled version of $w$.
	An occurrence-based function is defined as applying a string homomorphism $h: (V \times N_k)^* \rightarrow (V')^*$ to an already labelled version of a word. As a shorthand we will write $h(w)$ instead of $h(H(w))$.
	
\end{definition}
\begin{theorem}
    Let $G$ be a word-representable graph with minimum length of its word-representant $l(G)$. Then, minimum length of the word-representants of the graph $G \circ K_2$,
    \begin{equation*}
        l(G \circ K_2) \le 2l(G) + \kappa_G.
    \end{equation*}
    where $\kappa_G$ is the size of the maximum clique of the graph $G$.
\end{theorem}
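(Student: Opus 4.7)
The plan is to explicitly construct a word $W$ representing $G\circ K_2$ of length $2l(G)+|C|\le 2l(G)+\kappa_G$, by applying an occurrence-based function $h$ to a minimum length word-representant $w_G$, and then to verify correctness by case analysis. Let $C:=O(w_G,1)$ be the set of letters occurring exactly once in $w_G$; Lemma~\ref{kap} gives $|C|\le\kappa_G$, and since any two singly-occurring letters alternate automatically in $w_G$, the set $C$ induces a clique in $G$---a fact I will use when analyzing pendant-pendant pairs.

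I would define $h$ as follows: the first labelled occurrence of every vertex $v\in V(G)$ is mapped to the sandwich block $h(v,1)=v'\,v\,v'$. For $v\in V(G)\setminus C$ with $k_v:=O_{w_G}(v)\ge 2$, each middle occurrence is sent to $h(v,i)=v\,v'$ (for $1<i<k_v$) and the last occurrence to $h(v,k_v)=v$ alone. A direct count then gives
\[
|W|=3|C|+\sum_{v\notin C}\bigl(3+2(k_v-2)+1\bigr)=3|C|+2\bigl(l(G)-|C|\bigr)=2l(G)+|C|\le 2l(G)+\kappa_G.
\]

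The verification that $W:=h(w_G)$ represents $G\circ K_2$ will proceed by checking each pair $\{x,y\}$ of vertices of $G\circ K_2$. For two base vertices $v,u\in V(G)$, the map $h$ preserves the relative order of base letters (each block contributes exactly one copy of its base vertex), so $W|_{V(G)}=w_G$ still represents $G$. For a base vertex $v$ and its own pendant $v'$, the restricted word reads $v'vv'$ when $v\in C$ and $v'(vv')^{k_v-1}v$ when $v\notin C$; both strictly alternate, confirming the pendant edge $vv'$.

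The hard part will be verifying non-alternation for the cross-pairs $\{v,u'\}$ and $\{v',u'\}$ with $v\ne u$. My key observation is that the sandwich block $h(u,1)=u'\,u\,u'$ places two copies of $u'$ separated only by the letter $u$; when restricting to $\{v,u'\}$ or $\{v',u'\}$, the intervening $u$ drops out and the two $u'$ copies appear consecutively, which immediately breaks alternation. Likewise the $v$-sandwich yields two consecutive $v'$ copies in $W|_{\{v',u'\}}$. I would then work through the subcases (membership of $v,u$ in $C$, relative order of first occurrences in $w_G$), exploiting the clique structure of $C$ to confirm that when $v,u\in C$ the adjacency in $G$ is preserved by the alternation of the base parts of their sandwich blocks, while the clustered pendant copies block any spurious cross-alternation. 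This case analysis, though straightforward in spirit, is the most delicate step, since one must verify in every configuration that the consecutive pendant copies from the sandwich blocks indeed sit in positions that prevent alternation.
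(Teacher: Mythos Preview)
Your proposal is correct and follows essentially the same approach as the paper: both apply an occurrence-based function to a minimum-length word $w_G$, insert a ``sandwich'' block $v'vv'$ at one occurrence of each vertex so that the two pendant copies become consecutive in any restriction not containing $v$, and then invoke Lemma~\ref{kap} to bound $|O(w_G,1)|\le\kappa_G$. The only cosmetic difference is the placement of the blocks: the paper puts the bare root letter at the \emph{first} occurrence and the sandwich at the \emph{second} (with $x^r x^1$ thereafter), whereas you put the sandwich at the \emph{first} occurrence, $vv'$ in the middle, and the bare base letter at the \emph{last}; both yield the same length $2l(G)+|O(w_G,1)|$ and the same non-alternation mechanism. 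One small remark: the clique structure of $C$ that you highlight is not actually needed in the verification---the consecutive $u'u'$ from the sandwich already kills every cross pair $\{v,u'\}$ and $\{v',u'\}$ uniformly, so your final case analysis can be streamlined.
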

\begin{proof}
    Let $w_G$ be a word-representant of the graph $G$. Let $V(K_2) = \{r,1\}$, where $r$ is the rooted vertex. Let $x^r, x^1 \in V(G) \times V(K_2)$, where $x \in V(G)$. We claim that the word $w_{G \circ K_2} = h(w_G)$ represents the graph $G \circ K_2$, where\\
    
    If $O_{w_G}(x) = 1$
    \begin{equation*}
        h(x,1) = x^1x^rx^1
    \end{equation*}
    
    If $O_{w_G}(x) \ge 2$,
    \begin{equation*}
        h(x,i) = \begin{cases}
        x^r&  i = 1,\\
        x^1x^rx^1&  i = 2,\\
        x^rx^1&  i \ge 3.
        \end{cases}
    \end{equation*}

    By definition of the Rooted product of the graphs, two vertices $x^i$ and $y^j$ alternate in $w_{G \circ K_2}$ if and only if either $i = j = r$ and $x$ and $y$ alternate in $w_G$ or $x=y$ and $i$ and $j$ alternate in $w_{K_2}$. 

    Suppose $i = j = r$. Then, $$w_{G \circ K_2}|_{\{x^r, y^r\}} = h(w_G)|_{\{x^r, y^r\}}$$
    Since for all $x \in V(G)$, $x^r$ occurs exactly once for every occurrence of $x$ in $w_G$, $x^r$ and $y^r$ alternate in $w_{G \circ K_2}$ if and only if $x$ and $y$ alternate in $w_G$.
    
    Suppose $i \not= j$ and $x = y$. Then, $$w_{G \circ K_2}|_{\{x^r, x^1\}} = h(w_G)|_{\{x^r, x^1\}} = \begin{cases}
        x^1x^rx^1&  O_{w_G}(x) = 1,\\
        x^rx^1x^rx^1\ldots&  O_{w_G}(x) \ge 2.
    \end{cases}$$
    Hence, $x^r$ and $x^1$ alternate in $w_{G \circ K_2}$ as $r$ and $1$ alternate in $w_{K_2}$.

    Suppose $i \not= j$ and $x \not= y$. Then without loss of generality consider, $$w_{G \circ K_2}|_{\{x^r, y^1\}} = h(w_G)|_{\{x^r, y^1\}}$$
    Since between two $y^1$s only $y^r$ occurs exactly once, by \textit{Proposition }\ref{xwx}, $x^r$ and $y^1$ do not alternate in $w_{G \circ K_2}$.
    
    Therefore, $w_{G \circ K_2}$ represents the graph $G \circ K_2$. Hence,
    \begin{eqnarray*}
         l(G \circ K_2) & \le & |w_{G \circ K_2}|\\
         & = & \sum_{i=2}^{\mathcal{R}(G)} 2i|O(w_G, i)| + 3|O(w_G,1)|\\
         & = & 2\sum_{i=1}^{\mathcal{R}(G)} i|O(w_G, i)| + |O(w_G,1)|\\
         & = & 2l(G) + |O(w_G,1)|.
    \end{eqnarray*}
    Hence from \textit{Lemma }\ref{kap}, we get
    $$ l(G \circ K_2) \le 2l(G) + \kappa_G$$ where $\kappa_G$ is the size of the maximum clique of $G$.
\end{proof}
\begin{theorem}
    Let $G$ be a word-representable graph with minimum length of its word-representant $l(G)$. Then, minimum length of the word-representant of the graph $G \circ K_n$,
    \begin{equation*}
        l(G \circ K_n) \le nl(G) + (n-1)\kappa_G.
    \end{equation*}
    where $\kappa_G$ is the size of the maximum clique of the graph $G$.
\end{theorem}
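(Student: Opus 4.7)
The plan is to generalize the occurrence-based function $h$ from the preceding theorem for $G \circ K_2$ so that $w_{G \circ K_n} = h(w_G)$ represents $G \circ K_n$. Let $V(K_n) = \{r, 1, 2, \ldots, n-1\}$ with $r$ the root, and write the vertex of the copy of $K_n$ rooted at $x$ labelled by $i$ as $x^i$. I propose: when $O_{w_G}(x) = 1$, set $h(x, 1) = x^1 x^2 \cdots x^{n-1} x^r x^1 x^2 \cdots x^{n-1}$; when $O_{w_G}(x) \ge 2$, set $h(x, 1) = x^r$, $h(x, 2) = x^1 x^2 \cdots x^{n-1} x^r x^1 x^2 \cdots x^{n-1}$, and $h(x, k) = x^r x^1 x^2 \cdots x^{n-1}$ for $k \ge 3$. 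This design is engineered so that each $x^j$ with $j \ne r$ appears the right number of times, in the right relative positions, to alternate with $x^r$ and with every other $x^i$ (accounting for the completeness of each copy of $K_n$), while the block $h(x, 2)$ (or $h(x, 1)$ when $O_{w_G}(x) = 1$) contains a doubled substring $x^j \cdots x^j$ that will be exploited to break alternation with letters of other copies.

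The verification that $w_{G \circ K_n}$ represents $G \circ K_n$ splits into three cases, mirroring the $G \circ K_2$ proof. In Case (i), $x = y$: the restriction to $\{x^r, x^j\}$ produces the pattern $x^r x^j x^r x^j \cdots$ (or $x^j x^r x^j$ when $O_{w_G}(x) = 1$), and the restriction to $\{x^i, x^j\}$ for $i, j \in \{1, \ldots, n-1\}$ with $i < j$ produces $x^i x^j x^i x^j \cdots$; both alternate, matching the completeness of each copy of $K_n$. In Case (ii), $x \ne y$ with both indices equal to $r$: since $h$ contributes exactly $O_{w_G}(x)$ copies of $x^r$, in the same relative order as the occurrences of $x$ in $w_G$, $x^r$ and $y^r$ alternate in $w_{G \circ K_n}$ if and only if $x$ and $y$ alternate in $w_G$, which is precisely the adjacency relation in $G \circ K_n$ inherited from $G$.

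The main obstacle is Case (iii), where $x \ne y$ and at least one of the indices is not $r$. The key observation is that within $h(y, 2)$ (or within $h(y, 1)$ when $O_{w_G}(y) = 1$), the two copies of $y^j$ are separated only by letters of the form $y^k$ and $y^r$, never by a letter of another copy of $K_n$. Consequently, the restriction $w_{G \circ K_n}|_{\{y^j, z^i\}}$ for any $z \ne y$ and any index $i$ contains the factor $y^j y^j$, immediately breaking alternation. Swapping the roles of $x$ and $y$ disposes of the remaining sub-case.

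Finally, the length is computed as follows. The block $h(x, 1)$ has length $1$ when $O_{w_G}(x) \ge 2$; the block $h(x, 2)$ has length $2n - 1$; the block $h(x, k)$ for $k \ge 3$ has length $n$; and the block $h(x, 1)$ has length $2n - 1$ when $O_{w_G}(x) = 1$. Summing these contributions yields $n \cdot O_{w_G}(x)$ for each $x$ with $O_{w_G}(x) \ge 2$ and $2n - 1$ for each $x$ with $O_{w_G}(x) = 1$, so
\begin{equation*}
|w_{G \circ K_n}| = n \sum_{k \ge 2} k\,|O(w_G, k)| + (2n - 1)\,|O(w_G, 1)| = n\,l(G) + (n - 1)\,|O(w_G, 1)|,
\end{equation*}
and \textit{Lemma }\ref{kap} yields $|O(w_G, 1)| \le \kappa_G$, completing the bound.
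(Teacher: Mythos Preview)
Your proposal is correct and follows essentially the same approach as the paper: the occurrence-based function $h$ you define is identical to the paper's, the three-case verification matches (with only a cosmetic reordering), and the length computation and appeal to \textit{Lemma}~\ref{kap} are the same. One minor improvement in your write-up is that your Case~(iii) is phrased as ``$x\neq y$ and at least one index is not $r$'', which cleanly covers the sub-case $i=j\neq r$ that the paper's case split (``$i\neq j$ and $x\neq y$'') does not literally include, though the paper's argument via \textit{Proposition}~\ref{xwx} handles it anyway.
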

\begin{proof}
    Let $w_G$ be a word-representant of the graph $G$. Let $V(K_n) = \{r,1,2, \ldots, n-1\}$, where $r$ is the rooted vertex. Therefore, $w_{K_n} = r12 \ldots (n-1)$ represents the graph $K_n$. Let $x^i \in V(G) \times V(K_2)$, where $x \in V(G)$ and $i \in V(K_n)$. We claim that the word $w_{G \circ K_n} = h(w_G)$ represents the graph $G \circ K_n$, where\\
     
    If $O_{w_G}(x) = 1$
    \begin{equation*}
        h(x,1) = x^1x^2\ldots x^{n-1}x^rx^1x^2\ldots x^{n-1}
    \end{equation*}

    If $O_{w_G}(x) \ge 2$,
    \begin{equation*}
        h(x,i) = \begin{cases}
        x^r&  i = 1,\\
        x^1x^2\ldots x^{n-1}x^rx^1x^2\ldots x^{n-1}&  i = 2,\\
        x^rx^1\ldots x^{n-1}&  i \ge 3.
        \end{cases}
    \end{equation*}
   
    By definition of the Rooted product of the graphs, two vertices $x^i$ and $y^j$ alternate in $w_{G \circ K_n}$ if and only if either $i = j = r$ and $x$ and $y$ alternate in $w_G$ or $x=y$ and $i$ and $j$ alternate in $w_{K_n}$. 

    Suppose $i = j = r$. Then, $$w_{G \circ K_n}|_{\{x^r, y^r\}} = h(w_G)|_{\{x^r, y^r\}}$$
    Since for all $x \in V(G)$, $x^r$ occurs exactly once for every occurrence of $x$ in $w_G$, $x^r$ and $y^r$ alternate in $w_{G \circ K_n}$ if and only if $x$ and $y$ alternate in $w_G$.

    Suppose $i \not= j$ and $x = y$. Without loss of generality consider $i < j$. Then,

    If $i \not= j \not= r$,
    $$w_{G \circ K_n}|_{\{x^i, x^j\}} = h(w_G)|_{\{x^i, x^j\}} = \begin{cases}
        x^ix^jx^ix^j&  O_{w_G}(x) = 1,\\
        x^ix^jx^ix^j\ldots&  O_{w_G}(x) \ge 2.
    \end{cases}$$

    If $j = r$,
    $$w_{G \circ K_n}|_{\{x^i, x^r\}} = h(w_G)|_{\{x^i, x^r\}} = \begin{cases}
        x^ix^rx^i &   O_{w_G}(x) = 1,\\
        x^rx^ix^rx^ix^r\ldots&  O_{w_G}(x) \ge 2.
    \end{cases}$$
    Hence, $x^i$ and $x^r$ alternate in $w_{G \circ K_n}$ as $i$ and $r$ alternate in $w_{K_n}$.
    
    Suppose $i \not= j$ and $x \not= y$. Then, $$w_{G \circ K_n}|_{\{x^i, y^j\}} = h(w_G)|_{\{x^i, y^j\}}$$
    Since between two $y^j$s only $y^r$ occurs exactly once, by \textit{Proposition }\ref{xwx}, $x^i$ and $y^j$ do not alternate in $w_{G \circ K_n}$.
    
    Therefore, $w_{G \circ K_n}$ represents the graph $G \circ K_n$. Hence,
    \begin{eqnarray*}
         l(G \circ K_n) & \le & |w_{G \circ K_n}|\\
         & = & \sum_{i=2}^{\mathcal{R}(G)} in|O(w_G, i)| + (2n-1)|O(w_G,1)|\\
         & = & n\sum_{i=1}^{\mathcal{R}(G)} i|O(w_G, i)| + (n-1)|O(w_G,1)|\\
         & = & nl(G) + (n-1)|O(w_G,1)|.
    \end{eqnarray*}
    Hence from \textit{Lemma }\ref{kap}, we get
    $$ l(G \circ K_n) \le nl(G) + (n-1)\kappa_G.$$ where $\kappa_G$ is the size of the maximum clique of $G$.
\end{proof}
\begin{theorem}
	Let $G$ and $H$ be two word-representable graphs, with minimum length of their word-representants $l(G)$ and $l(H)$, respectively. Then minimum length of the word-representants of the graph $G \circ H$,
	\begin{equation*}
		l(G \circ H) \le |H|l(G) + |G|l(H) + (n-1)\kappa_G.
	\end{equation*}
where $\kappa_G$ is the size of the maximum clique of the graph $G$.
\end{theorem}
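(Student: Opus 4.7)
The plan is to adapt the proof of Theorem \ref{GKn} by prepending $|G|$ relabelled copies of $w_H$ to the word produced by the $G\circ K_n$-style occurrence-based function, so that the non-clique structure of each copy of $H$ is explicitly encoded. Let $w_G$ (resp.\ $w_H$) be a minimum length word-representant of $G$ (resp.\ $H$), with $V(G) = \{x_1, \ldots, x_m\}$ and $V(H) = \{r, v_1, \ldots, v_{n-1}\}$ (where $r$ is the root), and for each $x \in V(G)$ let $w_H(x)$ denote the relabelling $v \mapsto x^v$ of $w_H$. I would choose the indexing $v_1, \ldots, v_{n-1}$ to be compatible with the order of final occurrences in $w_H$ (that is, indexed along $\sigma(w_H)$), which is what makes alternation-compatibility at the prefix/suffix junction work out.

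Define $h \colon V(G)^* \to (V(G)\times V(H))^*$ exactly as in the proof of Theorem \ref{GKn}, with the clique block $x^1 x^2 \cdots x^{n-1}$ replaced by $x^{v_1} x^{v_2} \cdots x^{v_{n-1}}$, and set
\[
w_{G \circ H} = \left(\prod_{j=1}^m w_H(x_j)\right) \cdot h(w_G).
\]
The length calculation is then immediate: $|w_{G \circ H}| = m\,l(H) + |h(w_G)|$, and the bound on $|h(w_G)|$ from Theorem \ref{GKn} gives $|h(w_G)| \le |H|\,l(G) + (n-1)\kappa_G$, so the two contributions sum exactly to $|G|\,l(H) + |H|\,l(G) + (n-1)\kappa_G$, matching the target.

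Correctness is verified by a three-case analysis parallel to Theorem \ref{GKn}. In Case (i), for cross-vertex roots $(x^r, y^r)$ with $x \ne y$, the prefix contributes only the isolated blocks $(x^r)^{O_{w_H}(r)}$ and $(y^r)^{O_{w_H}(r)}$, while the suffix $h(w_G)$ restricted to $\{x^r, y^r\}$ alternates iff $xy \in E(G)$ (inherited directly from the $G\circ K_n$ proof). In Case (ii), for same-copy pairs $(x^v, x^{v'})$, the prefix contributes $w_H(x)|_{\{v,v'\}}$ (which alternates iff $vv' \in E(H)$), and the $\sigma(w_H)$-compatible indexing of $v_1, \ldots, v_{n-1}$ is exactly the choice that makes the alternating pattern from the $K_n$-like suffix glue to the end of $w_H(x)|_{\{v,v'\}}$ without introducing a doubled letter at the junction. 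In Case (iii), for cross-copy non-root pairs $(x^v, y^{v'})$, the prefix arranges all labels of $x$ strictly before any label of $y$ in disjoint $w_H$-blocks, which breaks alternation whenever either letter occurs more than once; and the suffix itself fails to alternate these pairs by the $G\circ K_n$ analysis.

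The main obstacle is precisely the junction compatibility in Cases (i) and (ii). When $O_{w_H}(r) \ge 2$, the repeated $x^r$'s at the end of $w_H(x)$ in the prefix threaten to kill the root alternation that the suffix is trying to set up, and for root/non-root pairs within a copy the last letter of $w_H(x)|_{\{r,v\}}$ may clash with the first letter of the suffix pattern even when $rv \in E(H)$. Resolving this will require distinguishing the edge cases of $h$ (e.g. the $O_{w_G}(x) = 1$ case, and the first/last blocks when $O_{w_G}(x) \ge 2$), possibly by inserting short correction words of total length $O(n)$ absorbed in the $(n-1)\kappa_G$ term, and by exploiting Proposition \ref{uv} and Observation \ref{pw} to align the end of each $w_H(x_j)$ with the start of the corresponding $h$-block. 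Once this compatibility is secured, the length bound $l(G \circ H) \le |H|\,l(G) + |G|\,l(H) + (n-1)\kappa_G$ follows.
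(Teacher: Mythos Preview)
Your construction has a genuine gap that the proposed fix cannot repair. When you prepend the block $\prod_{j=1}^m w_H(x_j)$ and restrict to two root letters $\{x^r, y^r\}$ with $x \ne y$, you obtain
\[
(x^r)^{O_{w_H}(r)}\,(y^r)^{O_{w_H}(r)}\cdot h(w_G)|_{\{x^r,y^r\}}.
\]
Whenever $O_{w_H}(r) \ge 2$, the prefix already contains two consecutive $x^r$'s, so $x^r$ and $y^r$ fail to alternate regardless of the suffix; every edge $xy \in E(G)$ is destroyed. You correctly identified this as ``the main obstacle'', but the suggested remedy---short correction words of total length $O(n)$ absorbed in the $(n-1)\kappa_G$ term---cannot work: to separate the $O_{w_H}(r)$ copies of $x^r$ you would have to interleave root letters of \emph{other} vertices of $G$, and doing this for every $x_j$ costs on the order of $|G|\cdot O_{w_H}(r)$ letters, not $O(n)$, and simultaneously creates new adjacency constraints you would then have to verify.

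The paper avoids this by appending $J^{w_H}(\sigma(w_G))$ \emph{after} $h(w_G)$ rather than prepending concatenated copies of $w_H$. The point is that $J^{w_H}(\sigma(w_G))$ is organised letter-of-$w_H$ by letter-of-$w_H$: for each occurrence of some $v \in V(H)$ in $w_H$ one lays down \emph{all} of $\sigma(w_G)$ with superscript $v$. Restricted to a single fibre $\{x^{v_i}, x^{v_j}\}$ it collapses to $J^{w_H|_{\{v_i,v_j\}}}(x)$, encoding $H$-adjacency exactly as your prefix does; but restricted to root letters $\{x^{v_r}, y^{v_r}\}$ it collapses to repeated copies of $\sigma(w_G)|_{\{x,y\}}$, which glue seamlessly onto $h(w_G)|_{\{x^{v_r},y^{v_r\}}} = J^{v_r}(w_G|_{\{x,y\}})$ and preserve the $G$-adjacency by the $w\sigma(w)$ analogue of Observation~\ref{pw}. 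This simultaneous interleaving is precisely what your block-concatenation prefix lacks, and it is the key idea you are missing.
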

\begin{proof}
	Let $w_G$ and $w_H$ be word-representants of the graphs $G$ and $H$, respectively. Let $V(H) = \{v_1, v_2, \ldots,\\ v_r, \ldots, v_n\}$, where $v_r$ is the rooted vertex. Let $\pi(w_H) = v_1v_2\ldots v_r \ldots v_n$ be the initial permutation of the word $w_H$. Let $x^{v_i} \in V(G) \times V(H)$, where $x \in V(G)$. We claim that the word $w_{G \circ H} = h(w_G)J^{w_H}(\sigma(w_G))$, represents the graph $G \circ H$, where
	
	If $O_{w_G}(x)  = 1$,
	\begin{equation*}
		h(x,1) = x^{v_{r+1}}\ldots x^{v_n}x^{v_1}\ldots x^{v_r}\ldots x^{v_n}x^{v_1} \ldots x^{v_{r-1}}
	\end{equation*}

	If $O_{w_G}(x) \ge 2$,
	\begin{equation*}
		h(x,i) = \begin{cases}
			x^r&  i = 1,\\
			x^{v_{r+1}}\ldots x^{v_n}x^{v_1}\ldots x^{v_r}\ldots x^{v_n}x^{v_1} \ldots x^{v_{r-1}}&  i = 2,\\
			x^{v_r}\ldots x^{v_n}x^{v_1} \ldots x^{v_{r-1}}&  i \ge 3.
		\end{cases}
	\end{equation*}

By definition of the Rooted product of the graphs, two vertices $x^{v_i}$ and $y^{v_j}$ alternate in $w_{G \circ H}$ if and only if either $i = j = r$ and $x$ and $y$ alternate in $w_G$ or $x = y$ and $v_i$ and $v_j$ alternate in $w_H$.

Suppose $i = j= r$. Then,
$$w_{G \circ H}|_{\{x^{v_r}, y^{v_r}\}} = h(w_G)J^{w_H}(\sigma(w_G))|_{\{x^{v_r}, y^{v_r}\}} = h(w_G)|_{\{x^{v_r}, y^{v_r}\}}J^{v_r}(\sigma(w_G)|_{\{x, y\}})$$
Since $x^{v_r}$ occurs exactly once in $h(w_G)$ in place of every occurrence of $x$ in $w_G$ for all $x \in V(G)$, $h(w_G)|_{\{x^{v_r}, y^{v_r}\}} \\= J^{v_r}(w_G|_{\{x, y\}})$. Therefore by \textit{Proposition }\ref{gj}, $w_{G \circ H} = J^{v_r}(w_G\sigma(w_G)|_{\{x, y\}})$. Hence,  $x^{v_r}$ and $y^{v_r}$ alternate in $w_{G \circ H}$ if and only if $x$ and $y$ in $w_G$.

Suppose $x = y$ and $i \not = j$. Then,
$$ w_{G \circ H}|_{\{x^{v_i}, x^{v_j}\}} = h(w_G)J^{w_H}(\sigma(w_G))|_{\{x^{v_i}, x^{v_j}\}} = h(w_G)|_{\{x^{v_i}, x^{v_j}\}}J^{w_H|_{\{v_i, v_j\}}}(x)$$
If $i \not= j \not= r$,
$$h(w_G)|_{\{x^{v_i}, x^{v_j}\}} = \begin{cases}
	x^{v_i}x^{v_j}x^{v_i}x^{v_j}&  O_{w_G}(x) = 1,\\
	x^{v_i}x^{v_j}x^{v_i}x^{v_j}\ldots&  O_{w_G}(x) \ge 2.
\end{cases}$$
If $j = r$,
$$h(w_G)|_{\{x^{v_i}, x^{v_r}\}} = \begin{cases}
	x^{v_i}x^{v_r}x^{v_i}&  O_{w_G}(x) = 1,\\
	x^{v_r}x^{v_i}x^{v_r}x^{v_i}\ldots&  O_{w_G}(x) \ge 2.
\end{cases} $$
Therefore in both the cases, $w_{G \circ H}|_{\{x^{v_i},x^{v_j}\}} = (x^{v_i}x^{v_j}x^{v_i}x^{v_j}\ldots)J^{w_H|_{\{v_i, v_j\}}}(x)$. Hence clearly, $x^{v_i}$ and $x^{v_j}$ alternate in $w_{G \circ H}$ if and only if $v_i$ and $v_j$ alternate in $w_H$.

Suppose $i \not= j$ and $x \not= y$.
$$w_{G \circ H}|_{\{x^{v_i}, y^{v_j}\}} = h(w_G)|_{\{x^{v_i}, y^{v_j}\}}J^{w_H|_{\{v_i, v_j\}}}(\sigma(w_G)|_{\{x, y\}})$$
Since between any two $y^{v_j}$s only $y^{v_r}$ occurs exactly once, by \textit{Proposition }\ref{xwx}, $x^{v_i}$ and $y^{v_j}$ do not alternate in $w_{G \circ H}$.

Therefore, $w_{G \circ H}$ represents the graph $G \circ H$. Hence as $|J^{w_H}(w_G)| = l(H)l(G)$, 
\begin{eqnarray*}
	l(G \circ H) & \le & |w_{G \circ H}|\\
	& = & \sum_{i = 2}^{\mathcal{R}(G)} in|O(w_G, i)| + (2n-1)|O(w_G,1)| + |G|l(H)\\
	& = & n\sum_{i=1}^{\mathcal{R}(G)} i|O(w_G, i)| + (n-1)|O(w_G,1)|\\
	& = & nl(G) + (n-1)|O(w_G,1)|.
\end{eqnarray*}
Hence from \textit{Lemma }\ref{kap}, we get $$l(G \circ H) \le |H|l(G) + |G|l(H) + (|H|-1)\kappa_G.$$
where $\kappa_G$ is the size of the maximum clique of the graph $G$.
\end{proof}
\section{Conclusion and Future Work}
In Section 2, we found a tighter upper bound for the minimum length of the word-representants of the graph $K_n \: \square \: K_2$. It would be interesting to strengthen the bound for the Cartesian product of an arbitrary word-representable graph with $K_2$ and with another arbitrary word-representable graph. This leads to the following problem.
\begin{problem}
	Find tighter bounds for $l(G \: \square \: H)$ in terms of $l(G)$ and $l(H)$ for various $H$ as follows
	\begin{itemize}
		\item  $H \cong K_2$.
		\item  $H \cong K_n$.
		\item  $H$ is arbitrary.
	\end{itemize}
\end{problem}

In \cite{gaetz2020enumeration}, Gaetz and Ji found lower bounds for $l(G)$ for some classes of graphs like \textit{triangle-free graphs}. It would be interesting find lower bounds for minimum length of word-representability preserving graph operations.
\begin{problem}
	Find a lower bound for $l(G')$, where $G'$ is a graph obtained from word-representable graphs $G$ and $H$ using some word-representability preserving graph operation.
\end{problem}
\bibliographystyle{plain}
\bibliography{ref.bib}

\begin{thebibliography}{1}

\bibitem{broere2018word}
Bas Broere.
\newblock Word-representable graphs.
\newblock {\em Master thesis at Radboud University, Nijmegen}, 2018.

\bibitem{BroereZ19}
Bas Broere and Hans Zantema.
\newblock The $k$-dimensional cube is $k$-representable.
\newblock {\em J. Autom. Lang. Comb.}, 24(1):3--12, 2019.

\bibitem{gaetz2020enumeration}
Marisa Gaetz and Caleb Ji.
\newblock Enumeration and extensions of word-representants.
\newblock {\em Discrete Applied Mathematics}, 284:423--433, 2020.

\bibitem{kitaev13}
Sergey Kitaev.
\newblock On graphs with representation number 3.
\newblock {\em J. Autom. Lang. Comb.}, 18(2):97--112, 2013.

\bibitem{kitaev2017comprehensive}
Sergey Kitaev.
\newblock A comprehensive introduction to the theory of word-representable graphs.
\newblock In {\em Developments in Language Theory: 21st International Conference, DLT 2017, Li{\`e}ge, Belgium, August 7-11, 2017, Proceedings}, pages 36--67. Springer, 2017.

\bibitem{kitaev2015words}
Sergey Kitaev and Vadim Lozin.
\newblock {\em Words and graphs}.
\newblock Springer, 2015.

\bibitem{kitaev2008representable}
Sergey Kitaev and Artem Pyatkin.
\newblock On representable graphs.
\newblock {\em Journal of automata, languages and combinatorics}, 13(1):45--54, 2008.

\bibitem{kitaev2008word}
Sergey Kitaev and Steve Seif.
\newblock Word problem of the perkins semigroup via directed acyclic graphs.
\newblock {\em Order}, 25(3):177--194, 2008.

\bibitem{SRINIVASAN2024149}
Eshwar Srinivasan and Ramesh Hariharasubramanian.
\newblock Minimum length word-representants of word-representable graphs.
\newblock {\em Discrete Applied Mathematics}, 343:149--158, 2024.

\end{thebibliography}
%----------------
\end{document}